\newtheorem{thm}{Theorem}[section]
\newtheorem{cor}[thm]{Corollary}
\newtheorem{prop}[thm]{Proposition}
\newtheorem{lem}[thm]{Lemma}
\newtheorem{definition}[thm]{Definition}
\newtheorem{assum}[thm]{Assumption}
\newtheorem{rem}[thm]{Remark}
\numberwithin{equation}{section}
\numberwithin{thm}{section}
\newcommand{\vertiii}[1]{{\left\vert\kern-0.25ex\left\vert\kern-0.25ex\left\vert #1 
    \right\vert\kern-0.25ex\right\vert\kern-0.25ex\right\vert}}
\title[Nonlinear damped wave equations on metric measure spaces]{
Global existence and asymptotic behavior for semilinear damped wave equations on measure spaces}
\author[M. Ikeda, K. Taniguchi and Y. Wakasugi]
{Masahiro Ikeda, Koichi Taniguchi and Yuta Wakasugi}
\address[M. Ikeda]
{Faculty of Science and Technology,
Keio University, 
3-14-1 Hiyoshi, Kohoku-ku, Yokohama, 223-8522, Japan/ Center for Advanced Intelligence Project
RIKEN, Japan.}
\email{masahiro.ikeda@keio.jp/masahiro.ikeda@riken.jp}
\address[K. Taniguchi]
{Advanced Institute for Materials Research,
Tohoku University,
2-1-1 Katahira, Aoba-ku, Sendai, 980-8577, Japan.}
\email{koichi.taniguchi.b7@tohoku.ac.jp}
\address[Y. Wakasugi]
{Laboratory of Mathematics, 
Graduate School of Advanced Science and Engineering, 
Hiroshima University,
Higashi-Hiroshima, 739-8527, Japan.
}
\email{wakasugi@hiroshima-u.ac.jp}
\keywords{Damped wave equations, global existence, asymptotic behavior, self-adjoint operators, measure spaces}
\begin{document}

%\hfill \today 

\footnote[0]
{2020 {\it Mathematics Subject Classification.} 
Primary 35L15, 35L70, 35L90; Secondary 35A01, 35B40.}
%
% Abstract
\begin{abstract}
The purpose of this paper is to prove the small data global existence of solutions to the semilinear damped wave equation 
\[
\partial_t^2 u + Au + \partial_t u = |u|^{p-1}u
\]
on a measure space $X$ with a self-adjoint operator $A$ on $L^2(X)$. 
Under a certain decay estimate on the corresponding heat semigroup, we establish the linear estimates which generalize the so-called Matsumura estimates. 
Our approach is based on a direct spectral analysis analogous to the Fourier analysis.
The self-adjoint operators treated in this paper include some important examples such as the Laplace operators on Euclidean spaces, 
the Dirichlet Laplacian on an arbitrary open set,  the Robin Laplacian on an exterior domain, the Schr\"odinger operator, the elliptic operator, the Laplacian on Sierpinski gasket, and the fractional Laplacian.
\end{abstract}

\maketitle

%%%%%%%%%%%%%%%%%%%%%%%%%%%%%%%%%%%%
%%%%%%% %%%%%%    Section 1    %%%%%%%%%%%%%%%%%%
%%%%%%%%%%%%%%%%%%%%%%%%%%%%%%%%%%%%

\section{Introduction}
Let $(X,\mu)$ be a $\sigma$-finite measure space, and let $A$ be a non-negative and self-adjoint operator on $L^2(X)$. 
In this paper, we study the Cauchy problem of the semilinear damped wave equation
\begin{equation}\label{ndw}
\begin{cases}
\partial_t^2 u + Au + \partial_t u = F(u),
\quad &(t,x) \in (0,T) \times X,\\
u(0,x) = u_0(x),\quad \partial_t u(0,x) = u_1(x),\quad &x\in X,
\end{cases}
\end{equation}
where $T>0$, $u= u(t,x)$ is an unknown real-valued function on $(0,T) \times X$, and 
$u_i = u_i(x)$ are prescribed real-valued functions on $X$ for $i=0,1$. Here $F : \mathbb R \to \mathbb R$ satisfies $F(0)=0$, $F \in C^1(\mathbb R)$, and 
\begin{equation}\label{condi-F}
|F(z_1) - F(z_2)| \le C (|z_1|+|z_2|)^{p-1}|z_1-z_2|
\end{equation}
for any $z_1,z_2 \in\mathbb R$ and some $p>1$. The typical examples of $F(u)$ are
\[
F(u) = \pm |u|^p, \pm |u|^{p-1}u.
\]

%\medskip

The damped wave equation \eqref{ndw} with $X = \mathbb R^d$ and $A=-\Delta$, i.e.,
\begin{equation}\label{ndw-usual}
\partial_t^2 u -\Delta u + \partial_t u = F(u),
\quad (t,x) \in (0,T) \times \mathbb R^d,
\end{equation}
has been well studied. In the linear problem (i.e., $F(u)=0$), 
Matsumura \cite{Mat-1976} established $L^{q_1}$-$L^{q_2}$ estimates (later called the Matsumura estimates)
\begin{equation}\label{Matsumura}
\|u(t)\|_{L^{q_2}}
\lesssim 
\langle t \rangle^{-\frac{d}{2}(\frac{1}{q_1}-\frac{1}{q_2})}
(\|u_0\|_{L^{q_1}} + \|u_1\|_{L^{q_1}}) + e^{-\frac{t}{4}}
(\|u_0\|_{H^{[\frac{d}{2}] + 1}} + \|u_1\|_{H^{[\frac{d}{2}]}}) 
\end{equation}
for solutions $u=u(t)$ with initial data $(u_0,u_1)$, where $1 \le q_1 \le 2 \le q_2 \le \infty$ 
and $[d/2]$ denotes the integer part of $d/2$.
Then, nonlinear problems have been investigated based on the Matsumura estimates.

In the semilinear problem \eqref{ndw-usual} in $L^q$-framework with $1\le q\le 2$, the exponent 
\[
p=1+\frac{2q}{d}
\] 
is known to be an important critical number.
This exponent appears in terms of scale invariance of the semilinear heat equation with semilinear term $u^p$ in $L^q$-framework, 
and is critical in the sense that the global existence holds with small initial data in the case $p>1+2q/d$, but, in general, not in the case $1<p< 1+2q/d$. 
This phenomenon was first proved by Fujita \cite{Fujita} in $L^1$-framework, and so $p=1+2/d$ ($q=1$) is called the Fujita exponent. 
In the critical case $p=1+2q/d$, the asymptotic behavior of solutions depends on $q$: the global existence with small data when $q>1$, but not when $q=1$. 
These results have also been shown for the damped wave equation \eqref{ndw-usual}.
The $L^1$-framework was proved by Todorova and Yordanov \cite{TodYor-2001} (their result does not include the critical case $p=1+2/d$).
In the critical case  $p=1+2/d$, the global nonexistence with small initial data was proved by Zhang \cite{Zha-2001}. 
In the $L^q$-framework with $1<q\le 2$, the corresponding results were proved by 
Ikehata and Ohta \cite{IkeOht-2002}. 
For further developments and related works on \eqref{ndw-usual}, 
we refer to \cites{FIW-2020,HKN-2004,HO-2004,IIOW-2019,Ike-2002,Ike-2003,IkeTan-2005,Nar-2004,Nis-2003,Ono-2003,Pal-2020,Tak-2011}.
See Section 1 in \cite{IIOW-2019} for a summary of study of damped wave equations.

As mentioned above, the damped wave equation \eqref{ndw-usual}, i.e., the case $X = \mathbb R^d$ and $A=-\Delta$, has been well investigated. However, 
there are other interesting and important settings such as some perturbed problems, exterior problems, problems on manifolds, groups or measure spaces. 
These settings can be treated in a unified manner via the spectral approach, and several studies are known in this direction.
Ikehata and Nishihara \cite{IkeNis-2003} studied the asymptotic behavior of solutions to the linear abstract Cauchy problem
\begin{equation}\label{dw}
\begin{cases}
\partial_t^2 v + Av + \partial_t v = 0,\quad &(t,x) \in (0,\infty) \times X,\\
v(0,x) = v_0(x),\quad \partial_t v(0,x) = v_1(x),\quad &x\in X,
\end{cases}
\end{equation}
and Chill and Haraux \cite{ChiHar-2003} improved the convergence rates in the asymptotic behavior in \cite{IkeNis-2003}. 
The Matsumura type estimates for \eqref{dw} have been studied under some assumptions on $X$, $\mu$, and $A$ (see Radu, Todorova and Yordanov \cite{RTY-2011}, and Taylor and Todorova \cite{TayYor-2020}). 
The study of asymptotic behavior for \eqref{dw} has been further developed (see \cite{Nishiyama-2016,RTY-2016,Sob-2020}).
In terms of semilinear problems \eqref{ndw}, the global well-posedness was recently studied by 
Ruzhansky and Tokmagambetov \cite{RT-2019}, which treated self-adjoint operators $A$ with discrete spectrum.

The first purpose of this paper is to prove the Matsumura type estimates for solutions to
the linear problem \eqref{dw}.
The Matsumura type estimates have already beed studied by \cite{RTY-2011,TayYor-2020} in the abstract setting. However, 
our approach is different from these and 
is based on a direct argument analogous to the Fourier analysis; thereby, it is applicable to several evolution equations (see Remark~\ref{rem:linear} below).
The second purpose is to study the global existence for the semilinear problem \eqref{ndw} with small initial data.
Our result is an improvement and generalization of the result by \cite{IkeOht-2002} (see Remark~\ref{rem:compare} below).

In this paper, we use the spectral decomposition of $A$, instead of the Fourier transform, which plays a fundamental and important role when $X=\mathbb R^d$ and $A=-\Delta$. 
For a Borel measurable function $\phi$ on $\mathbb R$, 
an operator $\phi(A)$ is defined by 
\[
\phi(A) = \int_{\sigma(A)} \phi(\lambda)\, dE_{A}(\lambda)
\]
with the domain 
\[
\mathrm{Dom} (\phi(A)) = 
\left\{
f \in L^2(X) : \int_{\sigma(A)} |\phi(\lambda)|^2\, d\|E_A(\lambda) f\|_{L^2}^2 < \infty
\right\},
\]
where $\sigma(A)$ denotes the spectrum of $A$ and $\{E_{A}(\lambda)\}_ {\lambda\in\mathbb R}$ is the spectral resolution of the identity for $A$. 
We apply the spectral decomposition of $A$ to the linear problem \eqref{dw}, 
and obtain the representation formula
\begin{equation}\label{eq:formula}
v(t,x) = \mathcal D(t,A)(v_0+v_1) + \partial_t \mathcal D(t, A) v_0,
\end{equation}
where $\mathcal D(t,A)$ is defined by the spectral decomposition 
\[
\mathcal D(t,A) := \int_{\sigma(A)} \mathcal D(t,\lambda)\, dE_{A}(\lambda)
\]
with 
\[
\mathcal D(t,\lambda):=
\begin{cases}
\dfrac{e^{-\frac{t}{2}}\sinh \left(t\sqrt{\frac14 - \lambda} \right)}{\sqrt{\frac14 - \lambda}}\quad &\left(0\le \lambda < \dfrac14\right),\\
t e^{-\frac{t}{2}}\quad &\left(\lambda = \dfrac14\right),\\
\dfrac{e^{-\frac{t}{2}}\sin \left(t\sqrt{\lambda-\frac14}\right)}{\sqrt{\lambda-\frac14}}\quad &\left(\lambda > \dfrac14\right).
\end{cases}
\]

\medskip

In the last part of this section, let us introduce the notations used in this paper. 
For $a, b \ge0$, the symbols $a \lesssim b$ and $b \gtrsim a$ mean that there exists a constant $C>0$ such that $a \le C b$.
The symbol $a \sim b$ means that $a \lesssim b$ and $b \lesssim a$ happen simultaneously. 
We use the notation $\langle t \rangle := (1 + t^2)^{\frac12}$. We denote by $\mathbf{1}_{I}=\mathbf{1}_{I}(\lambda)$ the characteristic function of the interval $I \subset \mathbb R$. 
The space $L^q(X)$ is a space of 
all $\mu$-measurable functions on $X$ for which the $q$-th power of the absolute value is Lebesgue integrable, and
$\|\cdot \|_{L^q}$ stands for the usual $L^q(X)$-norm. 
For $s\ge0$, 
the Sobolev space $H^s(A)$ is defined by 
\[
H^s(A) := \{ f\in L^2(X) : \|f\|_{H^s(A)}<\infty\}
\]
with the norm
\[
\|f\|_{H^s(A)} := \|(I+A)^{\frac{s}{2}} f\|_{L^2(X)}.
\]
The space $H^{-s}(A)$ denotes the dual space of $H^s(A)$. 
We use the notation $\|\cdot\|_{V \to W}$  for the operator norm from a normed space $V$ to a normed space $W$.

%%%%%%%%%%%%%%%%%%%%%%%%%%%%%%%%%%%%
%%%%%%% %%%%%%    Section 2    %%%%%%%%%%%%%%%%%%
%%%%%%%%%%%%%%%%%%%%%%%%%%%%%%%%%%%%

\section{Statement of the results}

In the study of damped wave equation \eqref{ndw-usual}, the well-known decay estimate
\[
\|e^{t\Delta}\|_{L^2\to L^\infty} \lesssim t^{-\frac{d}{4}},\quad t>0
\]
is a fundamental tool. Taking this into account, throughout this paper, we make the following assumption for the study of the damped wave equations \eqref{ndw} in the abstract setting.

\begin{assum}\label{assum:A}
$A$ is a non-negative self-adjoint operator on $ L^2(X)$ and 
its semigroup $(e^{-tA})_{t\ge0}$ satisfies 
\begin{equation}\label{ass:A}
\|e^{-tA}\|_{L^2\to L^\infty} \lesssim t^{-\alpha},\quad t>0
\end{equation}
for some $\alpha>0$. 
\end{assum}

There are many examples of operators satisfying Assumption \ref{assum:A}: 
Dirichlet Laplacian on an open set, 
Robin Laplacian on an exterior domain,  
Schr\"odinger operator with a Kato type potential
or a Dirac delta potential, 
elliptic operator, 
Laplace operator on Sierpinski gasket,
fractional Laplacian, 
Laplace-Beltrami operator on a Riemann manifold, Laplace operator on a homogeneous group, and sub-Laplacian operator on Heisenberg group.  
See Section \ref{sec:6} for the details.

% Subsection 2.1
\subsection{Linear abstract Cauchy problems}

Our first result is an abstract version of 
the Matsumura estimates and asymptotic behavior of solutions to \eqref{dw}. 

\begin{thm}\label{thm:Matsumura-est}
Suppose that $A$ satisfies Assumption \ref{assum:A}. 
Let $k=0,1$, $q\in [1,2]$, $s\ge0$ and $\beta > 2\alpha + k + s-1$. 
Then: 
\begin{itemize}
\item[(i)] {\rm (Matsumura type estimates)}
\begin{equation}\label{Mat-est1}
\|\partial_t^k A^{\frac{s}{2}} \mathcal D(t,A) f\|_{L^\infty}
\lesssim \langle t\rangle^{-\frac{2\alpha}{q} - k - \frac{s}{2}} (\|f\|_{L^q} + e^{-\frac{t}{4}}\|f\|_{H^\beta(A)})
\end{equation}
for any $f\in L^q(X) \cap H^\beta(A)$ and $t>0$, and 
\begin{equation}\label{Mat-est2}
\|\partial_t^k A^{\frac{s}{2}} \mathcal D(t,A) f\|_{L^2}
\lesssim \langle t\rangle^{-2\alpha(\frac{1}{q} -\frac12) - k - \frac{s}{2}} (\|f\|_{L^q} + e^{-\frac{t}{4}}\|f\|_{H^{k+s-1}(A)})
\end{equation}
for any $f\in L^q(X) \cap H^{k+s-1}(A)$ and $t>0$.
\item[(ii)] {\rm (Asymptotic behavior)} 
\begin{equation}\label{est:q-infty-asy}
\|\partial_t^k A^{\frac{s}{2}} (\mathcal D(t,A) - e^{-tA})f\|_{L^\infty}
\lesssim \langle t\rangle^{-\frac{2\alpha}{q} - k - \frac{s}{2}-1} (\|f\|_{L^q} + e^{-\frac{t}{4}}\|f\|_{H^\beta(A)})
\end{equation}
for any $f\in L^q(X) \cap H^\beta(A)$ and $t\ge1$, and 
\begin{equation}\label{est:q-2-asy}
\|\partial_t^k A^{\frac{s}{2}} (\mathcal D(t,A) - e^{-tA}) f\|_{L^2}
\lesssim \langle t\rangle^{-2\alpha(\frac{1}{q} -\frac12) - k - \frac{s}{2}-1} 
(\|f\|_{L^q} + e^{-\frac{t}{4}}\|f\|_{H^{k+s-1}(A)})
\end{equation}
for any $f\in L^q(X) \cap H^{k+s-1}(A)$ and $t\ge1$.
\end{itemize}
\end{thm}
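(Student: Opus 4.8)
The plan is to transplant the Fourier-analytic proof of the Matsumura estimates to the spectral side: split $\sigma(A)$ into a low-frequency part $L=\{0\le\lambda\le\tfrac18\}$, an intermediate part $M=\{\tfrac18<\lambda\le1\}$ and a high-frequency part $H=\{\lambda>1\}$, and estimate the three pieces of $\partial_t^kA^{s/2}\mathcal D(t,A)$ (obtained by inserting $\mathbf 1_L(A),\mathbf 1_M(A),\mathbf 1_H(A)$) separately. Two preliminaries are needed. From Assumption~\ref{assum:A}, self-adjointness of $e^{-tA}$ (duality), the semigroup law and interpolation one gets $\|e^{-tA}\|_{L^q\to L^2}\lesssim t^{-2\alpha(\frac1q-\frac12)}$ and $\|e^{-tA}\|_{L^q\to L^\infty}\lesssim t^{-2\alpha/q}$ for $q\in[1,2]$, $t>0$. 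And since $\mathcal D(t,\cdot)$ is entire and, writing $\mu_\pm(\lambda)=\tfrac12(1\pm\sqrt{1-4\lambda})$, equals $(e^{-t\mu_+}-e^{-t\mu_-})/\sqrt{1-4\lambda}$ for $\lambda<\tfrac14$, an elementary computation gives the pointwise bounds: on $L$ (where $\mu_+\sim\lambda$, $\mu_-\ge\tfrac12$, $\sqrt{1-4\lambda}\sim1$) $|\partial_t^k\lambda^{s/2}\mathcal D(t,\lambda)|\lesssim\lambda^{k+s/2}e^{-t\lambda}+e^{-t/2}$; on $M$, $|\partial_t^k\lambda^{s/2}\mathcal D(t,\lambda)|\lesssim(1+t)^{k+1}e^{-t/8}$; and on $H$ (where $\sqrt{\lambda-\tfrac14}\gtrsim\lambda^{1/2}$) $|\partial_t^k\lambda^{s/2}\mathcal D(t,\lambda)|\lesssim\lambda^{(k+s-1)/2}e^{-t/2}$. (Differentiating under the spectral integral, $\partial_t^k\mathcal D(t,A)=(\partial_t^k\mathcal D)(t,A)$, is routine from these bounds.)

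For the low-frequency part and the $L^\infty$ estimate \eqref{Mat-est1}, I would, for $t\ge1$, discard the $O(e^{-t/2})$ remainder and write the operator with multiplier $\lambda^{k+s/2}e^{-t\lambda}\mathbf 1_L(\lambda)$ as $e^{-\frac{t}{3}A}\circ R(t,A)\circ e^{-\frac{t}{3}A}$, where $R(t,\lambda)=\lambda^{k+s/2}e^{-\frac{t}{3}\lambda}\mathbf 1_L(\lambda)$ satisfies $\|R(t,A)\|_{L^2\to L^2}=\sup_L|R(t,\lambda)|\lesssim t^{-(k+s/2)}$; composing $L^q\xrightarrow{e^{-\frac{t}{3}A}}L^2\xrightarrow{R(t,A)}L^2\xrightarrow{e^{-\frac{t}{3}A}}L^\infty$ and using $\alpha+2\alpha(\tfrac1q-\tfrac12)=\tfrac{2\alpha}{q}$ gives the $L^\infty$ bound of \eqref{Mat-est1} with constant $\lesssim t^{-2\alpha/q-k-s/2}$; the $L^2$ bound \eqref{Mat-est2} is the same with the final factor acting $L^2\to L^2$. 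The case $t\le1$, and the $O(e^{-t/2})$ remainder, are handled by the trivial bound with $e^{-A}$ inserted on both sides. The intermediate part is treated likewise: its multiplier being $O(e^{-t/8})$, this piece is $\lesssim e^{-t/8}\|f\|_{L^q}\lesssim\langle t\rangle^{-N}\|f\|_{L^q}$ for every $N$, hence absorbed into the $\|f\|_{L^q}$ term.

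The genuinely delicate step is the high-frequency part and obtaining $L^\infty$ bounds there, since no abstract embedding $H^\beta(A)\hookrightarrow L^\infty$ is available and the hypothesis only supplies $\|e^{-tA}\|_{L^2\to L^\infty}\lesssim t^{-\alpha}$. The device is a dyadic spectral decomposition: with $S_j=[2^j,2^{j+1})$, $j\ge0$, one has $e^{\pm2^{-j}\lambda}\sim1$ on $S_j$, so $\mathbf 1_{S_j}(A)\partial_t^kA^{s/2}\mathcal D(t,A)(1+A)^{-\beta/2}=e^{-2^{-j}A}\circ\bigl[e^{2^{-j}A}(\cdots)\bigr]$ with the bracket bounded $L^2\to L^2$ by $\lesssim2^{j[(k+s-1)/2-\beta/2]}e^{-t/2}$; since $\|e^{-2^{-j}A}\|_{L^2\to L^\infty}\lesssim2^{j\alpha}$ this yields
\[
\bigl\|\mathbf 1_{S_j}(A)\partial_t^kA^{s/2}\mathcal D(t,A)f\bigr\|_{L^\infty}\lesssim2^{j[\alpha+(k+s-1)/2-\beta/2]}e^{-t/2}\bigl\|\mathbf 1_{S_j}(A)(1+A)^{\beta/2}f\bigr\|_{L^2}.
\]
Summing in $j$ by Cauchy--Schwarz, with $\sum_j\|\mathbf 1_{S_j}(A)g\|_{L^2}^2\le\|g\|_{L^2}^2$, gives $\lesssim e^{-t/2}\|f\|_{H^\beta(A)}$, the series $\sum_j2^{2j[\alpha+(k+s-1)/2-\beta/2]}$ converging exactly because $\beta>2\alpha+k+s-1$; and $e^{-t/2}\lesssim\langle t\rangle^{-2\alpha/q-k-s/2-1}e^{-t/4}$, which is more than \eqref{Mat-est1} requires. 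For \eqref{Mat-est2} no dyadic sum is needed: $\|\partial_t^kA^{s/2}\mathcal D(t,A)\mathbf 1_H(A)f\|_{L^2}\lesssim e^{-t/2}\|(1+A)^{(k+s-1)/2}f\|_{L^2}=e^{-t/2}\|f\|_{H^{k+s-1}(A)}$.

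For part (ii), on $M\cup H$ both $\mathcal D(t,A)$ and $e^{-tA}$ decay exponentially, so the estimates above apply to $e^{-tA}$ as well (using $t\ge1$) and hence to the difference, the extra factor $\langle t\rangle^{-1}$ coming for free from $e^{-ct}\lesssim\langle t\rangle^{-1}e^{-ct/2}$. On $L$ the key point is that the difference gains a power of $\lambda$: using $\mu_+(\lambda)-\lambda=\mu_+(\lambda)^2$ and $(1-4\lambda)^{-1/2}-1=O(\lambda)$,
\[
\mathcal D(t,\lambda)-e^{-t\lambda}=e^{-t\lambda}\bigl(e^{-t\mu_+(\lambda)^2}-1\bigr)+\bigl((1-4\lambda)^{-1/2}-1\bigr)e^{-t\mu_+(\lambda)}-(1-4\lambda)^{-1/2}e^{-t\mu_-(\lambda)},
\]
whence $|\partial_t^k\lambda^{s/2}(\mathcal D(t,\lambda)-e^{-t\lambda})|\lesssim\lambda^{1+k+s/2}e^{-t\lambda/2}+e^{-t/2}$ on $L$; repeating the factorization above and converting the extra $\lambda$ into $t^{-1}$ via $\lambda^{1+k+s/2}e^{-t\lambda/2}\lesssim t^{-1-k-s/2}e^{-t\lambda/4}$ gives \eqref{est:q-infty-asy} and \eqref{est:q-2-asy}.
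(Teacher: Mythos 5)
Your argument is correct, and its overall architecture coincides with the paper's: the same low/intermediate/high spectral splitting (the paper uses $[0,\tfrac18]$, $(\tfrac18,\tfrac14]$, $(\tfrac14,1]$, $(1,\infty)$, with your $M$ merging the middle two), the same factorization of the low-frequency piece as $e^{-ctA}\circ(\text{bounded }L^2\text{ multiplier})\circ e^{-ctA}$ with the $L^q\to L^2$, $L^2\to L^2$, $L^2\to L^\infty$ chain, and the same source of the extra $\langle t\rangle^{-1}$ in part (ii), namely that the small characteristic root $\nu$ satisfies $\nu-\lambda=\nu^2=O(\lambda^2)$ together with $(1-4\lambda)^{-1/2}-1=O(\lambda)$ (this is exactly the paper's Lemma~\ref{lem:symbol_2}). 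The one genuinely different ingredient is your treatment of the high-frequency part in $L^\infty$: the paper first proves the abstract Sobolev embedding $\|g\|_{L^\infty}\lesssim\|g\|_{H^{s_0}(A)}$ for $s_0>2\alpha$ via the subordination formula $(I+A)^{-s_0/2}=\Gamma(\tfrac{s_0}{2})^{-1}\int_0^\infty t^{s_0/2-1}e^{-t}e^{-tA}\,dt$ (its Lemma~\ref{lem:Sobolev}, which contradicts your remark that no such embedding is available under Assumption~\ref{assum:A}) and then applies it with $\beta=s_0+s+k-1$; you instead run a dyadic spectral decomposition with $e^{-2^{-j}A}$ peeled off each block and Cauchy--Schwarz in $j$, which is in effect a Besov-type proof of the same embedding, and it pleasantly exhibits $\beta>2\alpha+k+s-1$ as precisely the convergence condition for the geometric series. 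Both routes are valid and yield the same range of $\beta$. Two cosmetic points: your labels $\mu_\pm=\tfrac12(1\pm\sqrt{1-4\lambda})$ are inconsistent with the subsequent claims $\mu_+\sim\lambda$, $\mu_-\ge\tfrac12$ (swap them), and in the low-frequency step you should factor the actual multiplier, writing $\partial_t^k\lambda^{s/2}\mathcal D(t,\lambda)\mathbf 1_L(\lambda)=e^{-t\lambda/3}\cdot\bigl[e^{2t\lambda/3}\partial_t^k\lambda^{s/2}\mathcal D(t,\lambda)\mathbf 1_L(\lambda)\bigr]\cdot e^{-t\lambda/3}$ and bounding the bracket's sup, rather than factoring the pointwise upper bound; your estimates already give exactly this, so nothing is lost.
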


\begin{rem}\label{rem:linear}
The Matsumura type estimates \eqref{Mat-est1} and \eqref{Mat-est2} in Theorem \ref{thm:Matsumura-est} have already been obtained under Assumption \ref{assum:A} with Markov property in \cite{RTY-2011}, 
where the proof is based on the diffusion phenomenon in Hilbert spaces, that is, 
the asymptotic behavior of solutions of \eqref{dw} to $e^{-tA}(v_0 + v_1)$. 
On the other hand, our proofs of \eqref{Mat-est1} and \eqref{Mat-est2} are different from it and is based on the direct argument analogous to the Fourier analysis without using the diffusion phenomenon. 
\end{rem}

% Subsection 2.2
\subsection{Semilinear abstract Cauchy problems}

Our second result is about the global existence for the semilinear problem \eqref{ndw}. 
Let us introduce the notion of solutions used in this paper. 

\begin{definition}
Let $T >0$ and $(u_0, u_1) \in H^1(A) \times L^2(X)$. 
A function $u : [0,T] \times X \to \mathbb R$ is called a mild solution to \eqref{ndw} with initial data $(u(0), \partial_t u(0)) = (u_0, u_1)$ if 
$u \in C([0,T] ; H^1(A)) \cap C^1([0,T] ; L^2(X))$ satisfies the integral equation 
\[
u(t) = u_L(t) + \int_0^t \mathcal D(t-\tau, A)(F(u(\tau)))\, d\tau
\]
in $C([0,T] ; H^1(A)) \cap C^1([0,T] ; L^2(X))$, 
where $u_L=u_L(t)$ is the solution to the linear problem \eqref{dw} with initial data $(u_L(0), \partial_t u_L(0)) = (u_0,u_1)$,
that is,
\[
	u_{L}(t) = \mathcal D(t,A)(u_0+u_1) + \partial_t \mathcal D(t, A) u_0.
\]
\end{definition}

To state our result, we introduce the important exponent
\[
p_F = p_F(\alpha,q) := 1 + \frac{q}{2\alpha}
\] 
for $\alpha>0$ and $q\in [1,2]$. The case $\alpha = d/4$ and $q=1$ is the Fujita exponent.
We note that
$p > p_F(\alpha, q)$
is equivalent with
$q < 2\alpha (p-1)$.
As we will see later,
$p_F(\alpha, q)$
is in some sense the critical exponent
for $L^q$-initial data.

We study the global existence for  \eqref{ndw}
under the condition
\begin{align}\label{supercritical}
    p> p_F(\alpha,1),\ 
    \frac{2}{p} < 2\alpha (p-1),
    \ 
    \text{and}
    \ 
    q \in \left[
    \max\left\{1,\frac{2}{p} \right\}, \min\{2,2\alpha (p-1)\}
    \right].
\end{align}
\medskip

In the semilinear problem, in addition to Assumption \ref{assum:A}, we make the following assumption if $\alpha>1/2$.

\begin{assum}\label{assum:B}
If $\alpha > 1/2$, then 
there exist
$q_1 \in [1,2)$ and $q_2 \in (2,4\alpha)$
such that the operator $A$ satisfies 
\[
\|e^{-tA}\|_{L^{q_j}\to L^{q_j}} \lesssim 1
\quad \text{and}\quad 
\|e^{-tA}\|_{L^{q_j}\to L^{\infty}} \lesssim t^{-\frac{2\alpha}{q_j}},\quad t>0
\]
for $j=1,2$.

\end{assum}

\begin{rem}\label{rem:assum:B}
Assumption \ref{assum:B} is actually needed only to prove the critical Sobolev inequality \eqref{c-Sobolev} in Lemma~\ref{lem:cS} below. 
To simplify the description, we always impose this assumption in the semilinear problem (see also Remark \ref{rem:additional-assum} below).
\end{rem}

We have the result on 
the local (in time) existence of mild solutions to the problem \eqref{ndw}.

\begin{prop}\label{prop:local}
Suppose that $A$ satisfies Assumptions \ref{assum:A} and \ref{assum:B}.
Let $p\in (1,\infty)$ and $(u_0, u_1) \in H^1(A) \times L^2(X)$. 
Assume that 
\begin{equation}\label{additional-assum}
p \le \frac{2\alpha}{2\alpha-1}\quad \text{if }\alpha > \frac12.
\end{equation}
Then there exist a time $T>0$,
depending only on
$\|u_0\|_{H^1(A)}$ and $\|u_1\|_{L^2}$, and 
a unique mild solution $u \in C([0,T] ; H^1(A)) \cap C^1([0,T] ; L^2(X))$ to \eqref{ndw} with initial data $(u_0, u_1)$. 
Moreover, if $T_{\max} < \infty$, then 
\[
\lim_{t\to T_{\max}} \left(
\|u(t)\|_{H^1(A)} + \|u_t(t)\|_{L^2}
\right) = \infty,
\]
where
$T_{\max}=T_{\max}(u_0, u_1)$ denotes the maximal existence time of $u$ defined by 
\[
\begin{split}
T_{\max}  := \sup 
\Big\{ T>0 : &
\text{ There exists a unique mild solution $u$ to \eqref{ndw}}\\
& \text{in $C([0,T] ; H^1(A)) \cap C^1([0,T] ; L^2(X))$ with initial data $(u_0, u_1)$}
\Big\}.
\end{split}
\]
\end{prop}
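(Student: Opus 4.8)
The plan is to prove Proposition~\ref{prop:local} by a standard contraction-mapping argument in the space $Y_T := C([0,T];H^1(A)) \cap C^1([0,T];L^2(X))$, equipped with the norm
\[
\|u\|_{Y_T} := \sup_{t\in[0,T]} \left( \|u(t)\|_{H^1(A)} + \|\partial_t u(t)\|_{L^2} \right).
\]
For $R>0$ to be chosen, let $B_R$ be the closed ball of radius $R$ in $Y_T$, and define the map $\Phi$ by
\[
\Phi(u)(t) := u_L(t) + \int_0^t \mathcal D(t-\tau,A)(F(u(\tau)))\,d\tau.
\]
I would show that for suitable $R$ (comparable to $\|u_0\|_{H^1(A)} + \|u_1\|_{L^2}$) and suitable $T = T(R) > 0$, the map $\Phi$ sends $B_R$ into itself and is a contraction there. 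The heart of the matter is the nonlinear estimate: one must control $\|F(u(\tau))\|_{L^2}$ (and the corresponding Lipschitz difference) by $\|u(\tau)\|_{H^1(A)}^p$. This is where Assumption~\ref{assum:B} and the hypothesis \eqref{additional-assum} enter, through the critical Sobolev inequality $\|f\|_{L^{2p}} \lesssim \|f\|_{H^1(A)}$ alluded to in Remark~\ref{rem:assum:B} (Lemma~\ref{lem:cS}). Using $|F(z)| \lesssim |z|^p$ and then that Sobolev embedding, one gets $\|F(u(\tau))\|_{L^2} \lesssim \|u(\tau)\|_{L^{2p}}^p \lesssim \|u(\tau)\|_{H^1(A)}^p \le R^p$.

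\textbf{Linear estimates for the Duhamel term.} Next I would estimate the action of $\mathcal D(t-\tau,A)$ and $\partial_t\mathcal D(t-\tau,A)$ on $L^2$ data, at the level of the energy space. From the spectral representation and elementary bounds on $\mathcal D(t,\lambda)$ and $\partial_t \mathcal D(t,\lambda)$ — which follow from $|\sinh(tr)/r| \le t e^{tr}$, $|\sin(tr)/r|\le t$, and the exponential prefactor $e^{-t/2}$ — one obtains the uniform bounds $\|\mathcal D(t,A)g\|_{H^1(A)} \lesssim \|g\|_{L^2}$, $\|\partial_t\mathcal D(t,A)g\|_{L^2} \lesssim \|g\|_{L^2}$, and similarly with one more time derivative or the $(I+A)^{1/2}$ factor moved around, on any finite time interval $[0,T]$ (with constants allowed to grow in $T$; in fact they are bounded globally since the $\lambda < 1/4$ part is dominated by $e^{-t/4}$ and the $\lambda\ge 1/4$ part has the $t$ or $1$ factor times $e^{-t/2}$, but uniformity on $[0,T]$ is all that is needed here). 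Combining this with Minkowski's integral inequality,
\[
\left\| \int_0^t \mathcal D(t-\tau,A)(F(u(\tau)))\,d\tau \right\|_{H^1(A)} \lesssim \int_0^t \|F(u(\tau))\|_{L^2}\,d\tau \lesssim T R^p,
\]
and the analogous bound for the time derivative (which produces $\partial_t \mathcal D$ inside the integral plus, from differentiating the upper limit, the boundary term $\mathcal D(0,A)F(u(t)) = 0$ since $\mathcal D(0,\lambda)=0$, which is why the solution lies in $C^1$). Hence $\|\Phi(u)\|_{Y_T} \le C_0(\|u_0\|_{H^1(A)}+\|u_1\|_{L^2}) + C_1 T R^p$; choosing $R := 2C_0(\|u_0\|_{H^1(A)}+\|u_1\|_{L^2})$ and $T$ so small that $C_1 T R^{p-1} \le 1/2$ gives the self-mapping property. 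The Lipschitz estimate $\|F(u)-F(v)\|_{L^2} \lesssim (\|u\|_{H^1(A)}+\|v\|_{H^1(A)})^{p-1}\|u-v\|_{H^1(A)}$ (again via the Sobolev embedding and the structural hypothesis on $F$) yields $\|\Phi(u)-\Phi(v)\|_{Y_T} \le C_1 T (2R)^{p-1}\|u-v\|_{Y_T}$, so after possibly shrinking $T$ further $\Phi$ is a contraction and Banach's fixed point theorem gives the unique local solution, with $T$ depending only on $\|u_0\|_{H^1(A)}$ and $\|u_1\|_{L^2}$. The continuity in $t$ (membership in $C([0,T];H^1(A))\cap C^1([0,T];L^2(X))$) of $\Phi(u)$ follows from strong continuity of the families $t\mapsto \mathcal D(t,A)$, $\partial_t\mathcal D(t,A)$ on $L^2$ and $H^1(A)$ (a consequence of the spectral theorem and dominated convergence) together with continuity of $\tau\mapsto F(u(\tau))$ in $L^2$.

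\textbf{Blow-up alternative.} Finally, for the maximal existence time: $T_{\max}$ is well defined, and if $T_{\max}<\infty$ but $\liminf_{t\to T_{\max}}(\|u(t)\|_{H^1(A)}+\|\partial_t u(t)\|_{L^2}) =: M < \infty$, then along a sequence $t_n \uparrow T_{\max}$ the data $(u(t_n),\partial_t u(t_n))$ stay in a fixed ball, so by the local theory one can solve \eqref{ndw} starting from time $t_n$ on a common interval $[t_n, t_n+\delta]$ with $\delta=\delta(M)>0$ independent of $n$; taking $n$ with $t_n + \delta > T_{\max}$ and invoking uniqueness to glue the solutions contradicts maximality. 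Hence $\|u(t)\|_{H^1(A)}+\|\partial_t u(t)\|_{L^2} \to \infty$ as $t\to T_{\max}$. I expect the main obstacle to be the nonlinear estimate in the borderline range where $\alpha > 1/2$, since there $L^\infty$-type embeddings are unavailable and one genuinely needs the critical Sobolev inequality of Lemma~\ref{lem:cS} — which in turn is exactly what forces both Assumption~\ref{assum:B} and the restriction \eqref{additional-assum}; everything else is the routine machinery of the contraction method and the standard continuation argument.
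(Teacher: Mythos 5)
Your proposal is correct and is exactly the route the paper takes: the authors explicitly omit the proof of Proposition~\ref{prop:local}, stating only that it is "based on the standard fixed point argument," and your contraction-mapping scheme in $C([0,T];H^1(A))\cap C^1([0,T];L^2(X))$ — using the uniform spectral bounds on $\mathcal D(t,\lambda)$, $\partial_t\mathcal D(t,\lambda)$ from Lemma~\ref{lem:symbol_1}, the Gagliardo--Nirenberg inequality of Lemma~\ref{lem:GN} for $p<2\alpha/(2\alpha-1)$ and the critical Sobolev inequality of Lemma~\ref{lem:cS} at the endpoint, followed by the standard continuation argument — is a faithful and complete instantiation of that argument.
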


Our main result is the following:

\begin{thm}\label{thm:sdge}
Suppose that $A$ satisfies Assumptions \ref{assum:A} and \ref{assum:B}.
Let $q \in [1,2]$ and $p\in (1,\infty)$ satisfy
\eqref{supercritical}
and \eqref{additional-assum}.
Then, there exists a constant $\epsilon_0>0$, depending on $X$, $\alpha$, $p$ and $q$, such that
if initial data $(u_0, u_1) \in (H^1(A) \cap L^q(X)) \times (L^2(X) \cap L^q(X))$ satisfies 
\[
I_{0} = I_{0}(u_0,u_1) := \|u_0\|_{L^q} + \|u_0\|_{H^1(A)} +  \|u_1\|_{L^q} + \|u_1\|_{L^2} \le \epsilon_0,
\]
there exists a unique global (in time) mild solution $u \in C([0,\infty) ; H^1(A)) \cap C^1([0,\infty) ; L^2(X))$ to the problem \eqref{ndw} with $(u(0), u_t(0))=(u_0, u_1)$.
Furthermore, the solution $u=u(t)$ satisfies 
\begin{equation}\label{global-est}
\begin{split}
& \langle t\rangle^{2\alpha (\frac{1}{q} - \frac12) + 1} (\log (2+t))^{-\delta} \|u_t(t)\|_{L^2}\\
& \hspace{3cm}+
\langle t\rangle^{2\alpha (\frac{1}{q} - \frac12) + \frac12}
 \|A^\frac12 u(t)\|_{L^2}
+
\langle t\rangle^{2\alpha (\frac{1}{q} - \frac12)}
\|u(t)\|_{L^2}
\lesssim 
I_0 
\end{split}
\end{equation}
for any $t>0$,
where
$\delta = 0$ if $q < 2\alpha (p-1)$
and $\delta = 1$ if $q = 2\alpha (p-1)$.
\end{thm}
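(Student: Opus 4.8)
The plan is to run a standard contraction-mapping argument in a time-weighted Banach space whose norm is a (logarithmically corrected) version of the left-hand side of \eqref{global-est}. Concretely, for $p,q$ as in the statement, set
\[
\|u\|_{\mathcal{X}} := \sup_{t>0}\Big( \langle t\rangle^{2\alpha(\frac1q-\frac12)}\|u(t)\|_{L^2} + \langle t\rangle^{2\alpha(\frac1q-\frac12)+\frac12}\|A^{\frac12}u(t)\|_{L^2} + \langle t\rangle^{2\alpha(\frac1q-\frac12)+1}(\log(2+t))^{-1}\|\partial_t u(t)\|_{L^2}\Big),
\]
let $\mathcal{X}_M:=\{u\in C([0,\infty);H^1(A))\cap C^1([0,\infty);L^2(X)):\|u\|_{\mathcal X}\le M\}$, which is complete for the metric induced by $\|\cdot\|_{\mathcal X}$, and define on it $\Phi(u)(t):=u_L(t)+\int_0^t\mathcal D(t-\tau,A)F(u(\tau))\,d\tau$, with $u_L$ the linear solution of \eqref{dw}. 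First I would bound the linear part: applying Theorem~\ref{thm:Matsumura-est}(i), in the form \eqref{Mat-est2}, to $f=u_0+u_1$ and $f=u_0$ with $(k,s)\in\{(0,0),(0,1),(1,0),(1,1),(0,2)\}$ — and using $\partial_t^2\mathcal D(t,A)=-A\mathcal D(t,A)-\partial_t\mathcal D(t,A)$ to reach $\partial_t u_L$ — gives $\|u_L\|_{\mathcal X}\lesssim I_0$, because the data spaces embed into all the $H^{k+s-1}(A)$ occurring in \eqref{Mat-est2}.

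The heart of the proof is the estimate of the Duhamel term. From the hypothesis on $F$, $\|F(u(\tau))\|_{L^r}\lesssim\|u(\tau)\|_{L^{pr}}^p$, and since \eqref{supercritical} forces $pr\ge pq\ge2$ while \eqref{additional-assum} forces $pr\le 2p\le\frac{4\alpha}{2\alpha-1}$ when $\alpha>\frac12$, the (critical) Sobolev inequality of Lemma~\ref{lem:cS}, together with interpolation between the $L^2$- and $A^{\frac12}$-bounds contained in $\|\cdot\|_{\mathcal X}$, yields the clean bound
\[
\|F(u(\tau))\|_{L^r}\lesssim\langle\tau\rangle^{-2\alpha(\frac{p}{q}-\frac1r)}\|u\|_{\mathcal X}^p,\qquad r\in\{q,2\},
\]
so in particular $\|F(u(\tau))\|_{L^q}\lesssim\langle\tau\rangle^{-2\alpha(p-1)/q}\|u\|_{\mathcal X}^p$, the exponent $2\alpha(p-1)/q$ being $>1$ when $q<2\alpha(p-1)$ and $=1$ at the borderline $q=2\alpha(p-1)$. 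Then, for each of $\partial_t^kA^{s/2}\Phi(u)$ with $(k,s)\in\{(0,0),(0,1),(1,0)\}$, I would split $\int_0^t=\int_0^{t/2}+\int_{t/2}^t$ and apply \eqref{Mat-est2} with a Lebesgue exponent in $[q,2]$ chosen adaptively: on $[0,t/2]$ one has $\langle t-\tau\rangle\sim\langle t\rangle$, producing a factor $\int_0^{t/2}\|F(u(\tau))\|_{L^q}\,d\tau$ which converges when $q<2\alpha(p-1)$ and is $\sim\log(2+t)$ at the borderline; on $[t/2,t]$ one has $\langle\tau\rangle\sim\langle t\rangle$, and integrating $\langle t-\tau\rangle^{-k-s/2}$ against the $L^2$-bound for $F$ supplies the remaining powers of $\langle t\rangle$, the case $k=1$ being the one that forces the single logarithm retained in $\|\cdot\|_{\mathcal X}$. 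Matching the exponents — where $2\alpha(p-1)/q\ge1$ from \eqref{supercritical} is precisely what is needed — gives $\|\Phi(u)\|_{\mathcal X}\lesssim I_0+\|u\|_{\mathcal X}^p$.

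An entirely parallel computation, now using $|F(z_1)-F(z_2)|\lesssim(|z_1|+|z_2|)^{p-1}|z_1-z_2|$ in place of $|F(u)|\lesssim|u|^p$, gives $\|\Phi(u)-\Phi(v)\|_{\mathcal X}\lesssim(\|u\|_{\mathcal X}+\|v\|_{\mathcal X})^{p-1}\|u-v\|_{\mathcal X}$. Choosing $M$ so small that the relevant constant times $M^{p-1}$ is $<\frac12$, and then $\epsilon_0$ so small that the constant from the linear part times $I_0$ is $\le M/2$, makes $\Phi$ a contraction on $\mathcal X_M$; its fixed point is the desired global solution and satisfies $\|u\|_{\mathcal X}\lesssim I_0$, which is \eqref{global-est} — in the supercritical case $q<2\alpha(p-1)$ the $[0,t/2]$ integrals converge without loss, so the logarithm may be dropped entirely ($\delta=0$), and at $q=2\alpha(p-1)$ it is needed only on $\|\partial_t u\|_{L^2}$ ($\delta=1$). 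That $u$ lies in $C([0,\infty);H^1(A))\cap C^1([0,\infty);L^2(X))$ and is the unique such solution then follows from the local theory of Proposition~\ref{prop:local} combined with the a priori bound just obtained, by the usual continuation argument.

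The step I expect to be the main obstacle is the nonlinear time-weighted estimate in the critical and endpoint regimes: one must choose the auxiliary Lebesgue exponents and split the Duhamel integral so that, for each of the three norms in $\|\cdot\|_{\mathcal X}$, every power of $\langle t\rangle$ (and every logarithm) matches the target in \eqref{global-est}, with $q=2\alpha(p-1)$ being exactly the threshold at which a logarithmic loss is unavoidable. The endpoint $p=\frac{2\alpha}{2\alpha-1}$ (when $\alpha>\frac12$) is the other delicate point, since there the embedding $H^1(A)\hookrightarrow L^{2p}$ degenerates to the critical Sobolev inequality of Lemma~\ref{lem:cS}, which is the sole place where Assumption~\ref{assum:B} is invoked.
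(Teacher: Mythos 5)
Your overall strategy (a time-weighted norm built from the Matsumura estimates of Theorem~\ref{thm:Matsumura-est} plus the Gagliardo--Nirenberg/critical Sobolev inequalities, closed by a fixed-point or a priori argument) is the same as the paper's; the paper runs an a priori estimate combined with Proposition~\ref{prop:local} and a continuation argument rather than a direct contraction on $(0,\infty)$, but that difference is inessential. There is, however, a genuine gap in your treatment of the Duhamel term at the critical exponent $q=2\alpha(p-1)$, and it comes precisely from your choice of Lebesgue exponents ``in $[q,2]$'', in particular $r=q$, for measuring the nonlinearity.

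Concretely, at $q=2\alpha(p-1)$ your bound gives $\|F(u(\tau))\|_{L^q}\lesssim\langle\tau\rangle^{-\frac{2\alpha}{q}(p-1)}\|u\|_{\mathcal X}^p=\langle\tau\rangle^{-1}\|u\|_{\mathcal X}^p$, so the far-field piece of the $L^2$-component of the Duhamel term is
\[
\int_0^{t/2}\langle t-\tau\rangle^{-2\alpha(\frac1q-\frac12)}\|F(u(\tau))\|_{L^q}\,d\tau
\lesssim \langle t\rangle^{-2\alpha(\frac1q-\frac12)}\log(2+t)\,\|u\|_{\mathcal X}^p ,
\]
i.e.\ a logarithmic loss on $\|u(t)\|_{L^2}$ (and likewise on $\|A^{1/2}u(t)\|_{L^2}$), not only on $\|\partial_t u(t)\|_{L^2}$ as you assert. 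This is not merely a weaker conclusion than \eqref{global-est}: since your norm $\mathcal X$ carries no logarithm on those two components, the scheme does not close; and if you insert logarithms there, the nonlinear estimate feeds back $(\log(2+\tau))^{p}$ into $\|F(u(\tau))\|_{L^q}$ and the iteration produces $(\log(2+t))^{p+1}$, which again does not close. The fix is the paper's choice: measure the nonlinearity in $L^{\sigma}$ with $\sigma=\max\{1,2/p\}$, the \emph{smallest} exponent compatible with both the Matsumura estimate ($\sigma\ge1$) and Lemma~\ref{lem:interpolation} ($\sigma p\ge 2$). Under \eqref{supercritical}, the critical case $q=2\alpha(p-1)$ forces $\sigma<q$ strictly (because $p>1+\frac{1}{2\alpha}$ and $\frac2p<2\alpha(p-1)$), and then the prefactor $\langle t\rangle^{-2\alpha(\frac1\sigma-\frac1q)}$ coming from the $L^\sigma\to L^2$ decay is a strictly negative power of $t$ that absorbs the logarithmic (indeed even the polynomial) growth of $\int_0^{t/2}\langle\tau\rangle^{-2\alpha(\frac1q-\frac{1}{\sigma p})p}\,d\tau$; this is exactly how the paper obtains \eqref{global-est} with $\delta$ attached only to the $\|\partial_t u\|_{L^2}$ component (where the logarithm genuinely arises, from the near-field integral $\int_{t/2}^t\langle t-\tau\rangle^{-1}\cdots d\tau$). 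Your linear-part estimate, the Lipschitz/difference estimate, and the remaining bookkeeping are fine once this substitution $r=q\rightsquigarrow\sigma=\max\{1,2/p\}$ is made.
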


\begin{rem}\label{rem:compare}
In the case of
$X = \mathbb{R}^d$,
Theorem \ref{thm:sdge} slightly improves
Theorems~1.1 and 1.2 in \cite{IkeOht-2002}.
First, Theorem \ref{thm:sdge}
includes the critical case
$q = 2\alpha(p-1)$,
i.e.,
$p=p_F(\alpha,q)$
except for $q = 1$.
Moreover, the lower bound
\[
\frac{\sqrt{d^2 + 16 d}- d}{4} \le q,
\]
is relaxed in Theorem \ref{thm:sdge} 
(more detailed comparison will be given in
Corollary \ref{cor:sdge1} below). 
\end{rem}

\begin{rem}
The conditions \eqref{supercritical} and \eqref{additional-assum} in Theorem \ref{global-est} imply that 
$\alpha < 1 + 1/\sqrt{2}$.
\end{rem}

\begin{rem}\label{rem:additional-assum}
Assumption \ref{assum:B} is needed only in the case $p = 2\alpha/(2\alpha-1)$ in Theorem~\ref{thm:sdge} in order to use the critical Sobolev inequality \eqref{c-Sobolev} in Lemma~\ref{lem:cS} below (see also Remark~\ref{rem:assum:B}). 
\end{rem}

\begin{rem}
The exponent $p_F(\alpha,q)$ is known to be the threshold of global in time existence or nonexistence for \eqref{ndw} in some special cases, for example, the cases of cone domains in $\mathbb R^d$, the Heisenberg group and compact Lie groups (see e.g. \cites{IS-2019,GP-2020,Pal-2021}).
\end{rem}

%%%%%%%%%%%%%%%%%%%%%%%%%%%%%%%%%%%%
%%%%%%% %%%%%%    Section 3    %%%%%%%%%%%%%%%%%%
%%%%%%%%%%%%%%%%%%%%%%%%%%%%%%%%%%%%

\section{Proof of Theorem \ref{thm:Matsumura-est}}

Throughout this section, we suppose that $A$ satisfies Assumption \ref{assum:A}. 
To prove Theorem \ref{thm:Matsumura-est}, 
let us prepare three lemmas.

\begin{lem}\label{lem:LpLq}
Let $k =0,1$, $s\ge0$ and $1\le q_1 \le 2 \le q_2 \le \infty$. Then 
\begin{equation}\label{LpLq_1}
\|\partial_t^k A^{\frac{s}{2}}e^{-tA}\|_{L^{q_1}\to L^{q_2}} \lesssim t^{-2\alpha(\frac{1}{q_1}-\frac{1}{q_2})-k -\frac{s}{2}},
\end{equation}
\begin{equation}\label{LpLq_2}
\|\partial_t^k A^{\frac{s}{2}}\chi (A) e^{-tA}\|_{L^{q_1}\to L^{q_2}} \lesssim
\langle t\rangle^{-2\alpha(\frac{1}{q_1}-\frac{1}{q_2})-k -\frac{s}{2}}
\end{equation}
for any $t>0$, 
where $\chi \in C^\infty_0(\mathbb R)$. 
\end{lem}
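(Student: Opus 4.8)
The plan is to derive both estimates from the spectral representation of the heat semigroup together with Assumption \ref{assum:A}, by interpolating between an $L^2 \to L^2$ bound and an $L^2 \to L^\infty$ bound, and then dualizing. First I would establish the endpoint $L^2 \to L^\infty$ estimate for $\partial_t^k A^{s/2} e^{-tA}$. Writing $\partial_t^k A^{s/2} e^{-tA} = (\partial_t^k A^{s/2} e^{-tA/2}) \circ e^{-tA/2}$ and using the spectral theorem, the operator $\partial_t^k A^{s/2} e^{-tA/2} = (-1)^k A^{k + s/2} e^{-tA/2}$ is bounded on $L^2$ with norm $\lesssim \sup_{\lambda \ge 0} \lambda^{k+s/2} e^{-t\lambda/2} \lesssim t^{-k-s/2}$ (the supremum of $\lambda^{m} e^{-t\lambda/2}$ over $\lambda\ge0$ is $C_m t^{-m}$). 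Composing with $\|e^{-tA/2}\|_{L^2\to L^\infty}\lesssim t^{-\alpha}$ from \eqref{ass:A} gives
\[
\|\partial_t^k A^{s/2} e^{-tA}\|_{L^2 \to L^\infty} \lesssim t^{-\alpha - k - s/2}.
\]
Similarly $\|\partial_t^k A^{s/2} e^{-tA}\|_{L^2 \to L^2} \lesssim t^{-k-s/2}$ directly from the spectral calculus.

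Next I would dualize: since $A$ is self-adjoint and $e^{-tA}$, $A^{s/2}$, $\partial_t^k$ all commute and are self-adjoint (on their natural domains), the adjoint of $\partial_t^k A^{s/2} e^{-tA}: L^2 \to L^\infty$ is the same operator viewed as $L^1 \to L^2$, so $\|\partial_t^k A^{s/2} e^{-tA}\|_{L^1 \to L^2} \lesssim t^{-\alpha - k - s/2}$. Then the Riesz–Thorin interpolation theorem applied to the family $\{\partial_t^k A^{s/2} e^{-tA}\}$ (with $t$ fixed) between the pairs $(L^1 \to L^2)$ and $(L^2 \to L^2)$ yields $\|\cdot\|_{L^{q_1}\to L^2} \lesssim t^{-2\alpha(1/q_1 - 1/2) - k - s/2}$ for $1\le q_1 \le 2$, and interpolating between $(L^2\to L^2)$ and $(L^2\to L^\infty)$ yields $\|\cdot\|_{L^2 \to L^{q_2}} \lesssim t^{-2\alpha(1/2 - 1/q_2) - k - s/2}$ for $2 \le q_2 \le \infty$. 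Composing $\partial_t^k A^{s/2} e^{-tA} = (A^{s/2} e^{-tA/2}) \circ (\partial_t^k e^{-tA/2})$ — or more cleanly splitting the semigroup as $e^{-tA} = e^{-tA/2} e^{-tA/2}$ and distributing the derivatives and powers of $A$ by a Leibniz-type argument — and applying the two half-interpolated bounds with $t/2$ in place of $t$ gives \eqref{LpLq_1} after collecting exponents: $-2\alpha(1/q_1 - 1/2) - 2\alpha(1/2 - 1/q_2) - k - s/2 = -2\alpha(1/q_1 - 1/q_2) - k - s/2$.

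For \eqref{LpLq_2}, with the cutoff $\chi\in C^\infty_0(\R)$, the point is to upgrade the $t^{-\gamma}$ decay (which blows up as $t\to 0$ in \eqref{LpLq_1}) to the bounded-near-zero factor $\langle t\rangle^{-\gamma}$. For $t\ge 1$ this is immediate from \eqref{LpLq_1} since $\chi(A)$ is bounded on $L^2$ and commutes with everything, so $\|\partial_t^k A^{s/2}\chi(A) e^{-tA}\|_{L^{q_1}\to L^{q_2}} \le \|\chi(A)e^{-tA/2}\|_{L^2\to L^2}\cdot\|\partial_t^k A^{s/2}e^{-tA/2}\|_{L^{q_1}\to L^{q_2}}\lesssim t^{-\gamma}\lesssim \langle t\rangle^{-\gamma}$. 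For $0<t\le 1$ one uses that on $\supp\chi$ — a bounded set — the symbol $\lambda^{k+s/2}\chi(\lambda) e^{-t\lambda}$ is uniformly bounded in $t$, and more precisely $\|\partial_t^k A^{s/2}\chi(A)e^{-tA}\|_{L^2\to L^\infty} \le \|\partial_t^k A^{s/2}\chi(A)e^{-tA}e^{-A}e^{A}\|\cdots$; concretely, write $\chi(A)e^{-tA} = \chi(A)e^{tA}\cdot e^{-2tA}$ where $\chi(\lambda)e^{t\lambda}$ has $L^2$-norm bounded uniformly for $t\in(0,1]$ (as $\lambda$ ranges over the compact $\supp\chi$), and then $e^{-2tA}$ on its own, combined again with \eqref{ass:A} at time $2t$, would reintroduce the blow-up — so instead I would simply absorb a fixed unit of heat flow: $\chi(A) e^{-tA} = [\chi(A) e^{-(t-1)A} \mathbf 1_{t\ge 1} + \chi(A) e^{tA} e^{-2tA}\mathbf1_{t<1}]$ is not clean either. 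The cleanest route for $t\le 1$: note $\partial_t^k A^{s/2}\chi(A) e^{-tA} = \partial_t^k A^{s/2}\chi(A) e^{-A} \cdot e^{(1-t)A}$, and since $e^{(1-t)A}$ for $0<t\le1$ has $L^2$-operator norm... no, that is unbounded. The correct statement is that $\partial_t^k A^{s/2}\chi(A) e^{-tA}$ equals $g_t(A)$ where $g_t(\lambda) = (-\lambda)^k \lambda^{s/2}\chi(\lambda) e^{-t\lambda}$, and on $\supp\chi$ this is uniformly bounded for $t\in(0,1]$; hence $\|g_t(A)\|_{L^2\to L^2}\lesssim 1$, and then $g_t(A) = g_t(A)\tilde\chi(A) e^{-A}\cdot e^{A}$ where $\tilde\chi\equiv1$ on $\supp\chi$ fails again. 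I would instead factor $g_t(A) = h(A)\circ e^{-A/2}$ where $h(\lambda) = g_t(\lambda)e^{\lambda/2}$ is still uniformly bounded on $\supp\chi$, so $\|g_t(A)\|_{L^2\to L^\infty} \le \|h(A)\|_{L^2\to L^2}\|e^{-A/2}\|_{L^2\to L^\infty}\lesssim 1$ by \eqref{ass:A} at the fixed time $1/2$; dualizing and interpolating as before gives $\|g_t(A)\|_{L^{q_1}\to L^{q_2}}\lesssim 1 \lesssim \langle t\rangle^{-\gamma}$ for $t\le 1$. Combining the two regimes yields \eqref{LpLq_2}.

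The main obstacle is purely bookkeeping: carefully tracking how the parameters $k$, $s$, $q_1$, $q_2$ split across the semigroup-composition step so that interpolation is applied to the right fixed-time operators, and handling the small-time regime for the cutoff version by trading heat smoothing (available at any fixed positive time from \eqref{ass:A}) against the boundedness of the truncated symbol on the compact set $\supp\chi$. No genuinely hard analysis is involved beyond self-adjoint functional calculus, duality, and Riesz–Thorin interpolation.
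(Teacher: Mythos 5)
Your proof of \eqref{LpLq_1} is correct and essentially identical to the paper's: the same $L^2\to L^2$ bound from the spectral calculus, the same factorization $e^{-tA}=e^{-tA/2}e^{-tA/2}$ combined with \eqref{ass:A} to get the $L^2\to L^\infty$ endpoint, duality to get $L^1\to L^2$, and Riesz--Thorin plus a semigroup-composition step. For \eqref{LpLq_2} the large-time regime is, as you say, just \eqref{LpLq_1} (though note the composition you wrote down is mis-ordered: to run the factorization $\chi(A)e^{-tA/2}\circ\partial_t^kA^{s/2}e^{-tA/2}$ from $L^{q_1}$ to $L^{q_2}$ you need the first factor measured in $L^{q_1}\to L^2$ and the second, with the cutoff attached, in $L^2\to L^{q_2}$ --- the fix is immediate since $\chi(A)$ is $L^2$-bounded and commutes with everything). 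Where you genuinely diverge from the paper is the small-time bound $\|\partial_t^kA^{s/2}\chi(A)e^{-tA}\|_{L^2\to L^\infty}\lesssim 1$ for $0<t<1$: after several discarded attempts you settle on writing the operator as $h(A)\circ e^{-A/2}$ with $h(\lambda)=(-\lambda)^k\lambda^{s/2}\chi(\lambda)e^{-t\lambda}e^{\lambda/2}$, which is uniformly bounded on the compact set $\supp\chi$ since $e^{-t\lambda}\le 1$ on $\sigma(A)\subset[0,\infty)$, and then invoke \eqref{ass:A} at the fixed time $1/2$. This is correct and somewhat more direct than the paper's route, which instead first proves the Sobolev inequality $\|f\|_{L^\infty}\lesssim\|f\|_{H^{s_0}(A)}$ for $s_0>2\alpha$ via the formula $(I+A)^{-s_0/2}=\Gamma(s_0/2)^{-1}\int_0^\infty t^{s_0/2-1}e^{-t}e^{-tA}\,dt$ and then uses the uniform $L^2$-boundedness of $(I+A)^{k+(s+s_0)/2}\chi(A)e^{-tA}$; the paper's detour has the side benefit of producing Lemma \ref{lem:Sobolev}, which is needed later, whereas your argument is self-contained and shorter for the purposes of this lemma alone. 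Please tighten the exposition of the $t\le 1$ case: the final argument is sound, but the string of abandoned factorizations should be deleted.
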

\begin{proof}
In order to prove \eqref{LpLq_1}, 
it is enough to show the four cases 
\begin{equation}\label{eq:q1q2}
(q_1,q_2) = (2,2), (2,\infty), (1,2), (1,\infty)
\end{equation}
and apply the Riesz-Thorin interpolation theorem.
Since $A$ is non-negative and $\partial_t^k e^{-tA} = (-A)^k e^{-tA}$, we have 
\[
\|\partial_t^k A^{\frac{s}{2}}e^{-tA}\|_{L^2\to L^2} \lesssim t^{-k-\frac{s}{2}},
\]
and 
\begin{equation}\label{lem:3.1_1}
\begin{split}
\|\partial_t^k A^{\frac{s}{2}}e^{-tA} f \|_{L^\infty} 
& = \|e^{-\frac{t}{2}A} A^{k+\frac{s}{2}}e^{-\frac{t}{2}A} f \|_{L^\infty} \\
& \lesssim t^{-\alpha} \|A^{k+\frac{s}{2}}e^{-\frac{t}{2}A} f \|_{L^2}\\
& \lesssim t^{-\alpha-k-\frac{s}{2}} \|f \|_{L^2}
\end{split}
\end{equation}
for any $t>0$. 
Moreover, 
since $A$ is self-adjoint on $L^2(X)$, we use the duality argument to obtain
\begin{equation}\label{lem:3.1_2}
\|\partial_t^k A^{\frac{s}{2}}e^{-tA}\|_{L^1 \to L^2} \lesssim  t^{-\alpha-k-\frac{s}{2}}
\end{equation}
for any $t>0$.
Finally, by combining \eqref{lem:3.1_1} and \eqref{lem:3.1_2}, we also have 
\[
\begin{split}
\|\partial_t^k A^{\frac{s}{2}}e^{-tA} f \|_{L^\infty} 
& =
\|\partial_t^k A^{\frac{s}{2}}e^{-\frac{t}{2}A} e^{-\frac{t}{2}A} f \|_{L^\infty} \\
& \lesssim t^{-\alpha-k-\frac{s}{2}} \|e^{-\frac{t}{2}A} f \|_{L^2}\\
& \lesssim t^{-\alpha-k-\frac{s}{2}}\cdot t^{-\alpha} \| f \|_{L^1}\\
& = t^{-2\alpha-k-\frac{s}{2}} \| f \|_{L^1}
\end{split}
\]
for any $t>0$. Thus, \eqref{LpLq_1} is proved.

Next, we prove \eqref{LpLq_2}. 
Similarly, it is enough to show the four cases \eqref{eq:q1q2} for this.
We begin with a proof of the following estimate:
\begin{equation}\label{LpLq_2-1}
\|\partial_t^k A^{\frac{s}{2}}\chi (A) e^{-tA}\|_{L^{2}\to L^{\infty}} \lesssim 1,\quad 0<t<1.
\end{equation}
We use the formula \eqref{appA:formula1}:
\[
(I+A)^{-\frac{s_0}{2}} g
=
\frac{1}{\Gamma(\frac{s_0}{2})} \int_{0}^\infty
t^{\frac{s_0}{2}-1} e^{-t} e^{-tA}g\, dt
\]
with $s_0>0$. 
Then, using \eqref{ass:A}, we have
\[
\|(I+A)^{-\frac{s_0}{2}} g\|_{L^\infty}
 \lesssim  
\int_{0}^\infty
t^{\frac{s_0}{2}-1-\alpha} e^{-t} \|g\|_{L^2}\, dt
 \lesssim \|g\|_{L^2}
\]
for any $g\in L^2(X)$ and $s_0>2\alpha$. Taking $g= (I+A)^{\frac{s_0}{2}}f$, we have the Sobolev inequality
\begin{equation}\label{Sobolev-special}
\|f\|_{L^\infty} \lesssim \|f\|_{H^{s_0}(A)},\quad f\in H^{s_0}(A)
\end{equation}
for $s_0>2\alpha$. By this inequality and uniform $L^2$-boundedness of $(I + A)^{k+\frac{s+s_0}{2}}\chi (A) e^{-tA}$ with respect to $t$, we obtain
\[
\|\partial_t^k A^{\frac{s}{2}}\chi (A) e^{-tA} f \|_{L^\infty} \lesssim  \|(I + A)^{k+\frac{s+s_0}{2}} \chi (A) e^{-tA} f \|_{L^2} \lesssim  \|f\|_{L^2}
\]
for any $f\in L^2(X)$. 
This proves \eqref{LpLq_2-1}. 

By combining \eqref{LpLq_2-1} and 
\eqref{LpLq_1} with $(q_1,q_2)=(2,\infty)$, we obtain \eqref{LpLq_2} with $(q_1,q_2)=(2,\infty)$. Similarly, we can also obtain the case $(q_1,q_2)=(2,2)$. 
The remaining cases can be obtained in a similar way to the proof of \eqref{LpLq_1}.
The proof of Lemma \ref{lem:LpLq} is finished.
\end{proof}

\begin{lem}\label{lem:symbol_1}
For any $\lambda\ge0$ and $t>0$, we have
\begin{equation}\label{symbol_1}
|\partial_t^k\mathcal D(t,\lambda)| 
\lesssim 1,\quad k=0,1.
\end{equation}
\end{lem}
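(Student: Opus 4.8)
The plan is to verify $|\partial_t^k \mathcal D(t,\lambda)| \lesssim 1$ by splitting into the three spectral regimes defining $\mathcal D(t,\lambda)$ and using elementary bounds on $\sinh$, $\sin$, and exponentials. Recall that a crucial structural feature is the prefactor $e^{-t/2}$, and that $\mathcal D(t,\lambda)$ is a smooth function of $(t,\lambda)$ across the transition point $\lambda = 1/4$ (since $\sinh(t z)/z$ and $\sin(t z)/z$ and $te^{-t/2}$ all agree as $z \to 0$ up to analytic continuation). Because of this smoothness it suffices to treat the cases separately with uniform constants.

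First I would handle the oscillatory region $\lambda > 1/4$. Writing $\omega = \sqrt{\lambda - 1/4}$, we have $\mathcal D(t,\lambda) = e^{-t/2} \sin(t\omega)/\omega$. Using $|\sin(t\omega)| \le \min\{1, t\omega\}$ we get $|\mathcal D(t,\lambda)| \le e^{-t/2} \min\{1/\omega, t\} \le t e^{-t/2} \lesssim 1$, so the $k=0$ case holds. For $k=1$, $\partial_t \mathcal D(t,\lambda) = e^{-t/2}\cos(t\omega) - \tfrac12 e^{-t/2}\sin(t\omega)/\omega$; the first term is bounded by $e^{-t/2} \le 1$ and the second by the bound just obtained, giving $|\partial_t \mathcal D(t,\lambda)| \lesssim 1$.

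Next I would handle the hyperbolic region $0 \le \lambda < 1/4$. Set $z = \sqrt{1/4 - \lambda} \in (0,1/2]$, so $\mathcal D(t,\lambda) = e^{-t/2}\sinh(tz)/z$. Since $0 < z \le 1/2$, we have $e^{-t/2}\sinh(tz)/z \le e^{-t/2} \cdot \tfrac12(e^{tz} - e^{-tz})/z \le e^{-t/2} \cdot e^{tz}/(2z) \cdot$ (and $e^{-tz} \ge 0$). Because $z \le 1/2$, $e^{-t/2}e^{tz} = e^{-t(1/2 - z)} \le 1$, so the only danger is the factor $1/z$ when $z$ is small, i.e.\ $\lambda$ near $1/4$; there one uses instead $\sinh(tz)/z \le t \cosh(tz) \le t e^{tz}$ (or $\sinh(tz) \le tz e^{tz}$), yielding $|\mathcal D(t,\lambda)| \le t e^{-t(1/2-z)} \le t e^{-t/4} \lesssim 1$ since $1/2 - z \ge 1/4$ when $z \le 1/4$; combining the two sub-ranges $z \le 1/4$ and $1/4 \le z \le 1/2$ covers all of $[0,1/4)$. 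The $\lambda = 1/4$ value $t e^{-t/2} \lesssim 1$ is immediate and is the limiting case of both. For $k=1$ in the hyperbolic region, $\partial_t \mathcal D(t,\lambda) = e^{-t/2}\cosh(tz) - \tfrac12 e^{-t/2}\sinh(tz)/z$; bound $e^{-t/2}\cosh(tz) \le e^{-t/2}e^{tz} \le 1$ and the other term by what was just shown.

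I expect the only mildly delicate point to be the uniformity of the constant as $\lambda \to 1/4^{\pm}$, where the naive bound $1/z$ or $1/\omega$ blows up; the fix in every case is to trade the singular factor for a factor of $t$ via $|\sin u| \le |u|$, $\sinh u \le u\cosh u$, which is exactly the mechanism that makes $\mathcal D$ continuous at $\lambda = 1/4$. Once those estimates are assembled, $|\partial_t^k \mathcal D(t,\lambda)| \lesssim 1$ holds with a single constant independent of $\lambda \ge 0$ and $t > 0$, which is the claim.
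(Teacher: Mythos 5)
Your proposal is correct and follows essentially the same route as the paper's proof: split at $\lambda=1/4$, handle the oscillatory region via $|\sin(t\omega)|\le t\omega$, and in the hyperbolic region trade the singular factor $1/z$ for a factor of $t$ near $\lambda=1/4$ while using $e^{-t(1/2-z)}\le 1$ away from it (the paper splits at $\lambda=1/8$ rather than your $z=1/4$, which is immaterial). The derivative is also treated identically, via $\partial_t\mathcal D=-\tfrac12\mathcal D+e^{-t/2}\cosh(t\sqrt{1/4-\lambda})$.
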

\begin{proof}
We write 
\[
\partial_t \mathcal D(t,\lambda)
=
-\frac{\mathcal D(t,\lambda)}{2} 
+
e^{-\frac{t}{2}}\cosh \left(t\sqrt{\frac14 - \lambda}\right)
\]
for any $0\le \lambda< 1/4$ and $t>0$. 
First, we will prove the estimate \eqref{symbol_1} for any $0\le \lambda<1/4$. 
We note that $-1/2 + \sqrt{1/4 -\lambda} \le 0$, since $0\le \lambda<1/4$. 
If $0 \le \lambda \le 1/8$, then
we have
\[
\mathcal D(t,\lambda)
=
\frac{e^{-\frac{t}{2}}( e^{t\sqrt{\frac14 - \lambda}} - e^{-t\sqrt{\frac14 - \lambda}} )}
{2\sqrt{\frac14 - \lambda}}
\le 
\frac{e^{-\frac{t}{2} + t\sqrt{\frac14 - \lambda}} }
{2\sqrt{\frac14 - \lambda}}
\le \sqrt{2} 
\]
for any $t>0$. If $1/8 \le \lambda < 1/4$, then 
we have
\[
|\mathcal D(t,\lambda)|
=
\frac{e^{-\frac{t}{2}}}{2} \cdot 
\frac{ e^{t\sqrt{\frac14 - \lambda}} - e^{-t\sqrt{\frac14 - \lambda}} }
{\sqrt{\frac14 - \lambda}} 
\le \frac{e^{-\frac{t}{2}}}{2} \cdot 
2te^{t\sqrt{\frac14 - \lambda}} 
\le t e^{-\frac{2-\sqrt{2}}{4}t}
\lesssim 1
\]
for any $t>0$, where
we used in the second step 
\[
e^{t\sqrt{\frac14 - \lambda}} - e^{-t\sqrt{\frac14 - \lambda}}
\le 
2
\left(t\sqrt{\frac14 - \lambda}\right)
e^{t\sqrt{\frac14 - \lambda}}.
\]
Moreover, 
the above estimates give
\[
|\partial_t \mathcal D(t,\lambda)|
\le 
\frac{|\mathcal D(t,\lambda)|}{2} + e^{-\frac{t}{2} + \sqrt{\frac14 -\lambda}} 
\le 
\frac{|\mathcal D(t,\lambda)|}{2} + 1
\lesssim 1
\]
for any $t>0$.
Therefore, we obtain the estimate \eqref{symbol_1} for any $0\le \lambda<1/4$. 
If $\lambda> 1/4$, then
the estimates
\[
|\mathcal D(t,\lambda)|
=
te^{-\frac{t}{2}} \cdot \frac{\left|\sin \left(t\sqrt{\lambda-\frac14} \right)\right|}{t\sqrt{\lambda-\frac14}} \le te^{-\frac{t}{2}} \lesssim 1,
\]
\[
|\partial_t \mathcal D(t,\lambda)|
\le
\frac{|\mathcal D(t,\lambda)|}{2} + e^{-\frac{t}{2}} \left|\cos \left(t\sqrt{\lambda-\frac14}\right)\right|
\le \left(\frac{t}{2} + 1\right)e^{-\frac{t}{2}}\lesssim 1
\]
hold
for any $t>0$. Therefore, we also obtain \eqref{symbol_1} for $\lambda >1/4$. 
The estimate \eqref{symbol_1} is trivial for $\lambda = 1/4$. The proof of Lemma \ref{lem:symbol_1} is finished.
\end{proof}

\begin{lem}\label{lem:symbol_2}
For any $t\ge1$ and $k=0,1$, we have
\begin{equation}\label{symbol_2_1}
\|
\mathbf{1}_{[0,\frac18]}(A) e^{\frac{t}{2}A} \partial_t^k (\mathcal D(t,A) - e^{-tA})
\|_{L^2 \to L^2}
\lesssim \langle t \rangle^{-1-k},
\end{equation}
\begin{equation}\label{symbol_2_2}
\|
\mathbf{1}_{(\frac18,\frac14]}(A) e^{\frac{t}{2}A} \partial_t^k (\mathcal D(t,A) - e^{-tA})
\|_{L^2 \to L^2}
\lesssim \langle t \rangle^{-1-k}.
\end{equation}
Moreover, for $s\ge 0$, we have 
\begin{equation}\label{symbol_2_3}
\|
A^{\frac{s}{2}}\mathbf{1}_{(\frac14,1]}(A) e^{\frac{t}{2}A}\partial_t^k (\mathcal D(t,A) - e^{-tA})
\|_{L^2 \to H^{\beta}(A)}
\lesssim e^{-\frac{t}{4}},
\end{equation}
\begin{equation}\label{symbol_2_4}
\|
A^{\frac{s}{2}}\mathbf{1}_{(1,\infty)}(A) e^{\frac{t}{2}A}\partial_t^k (\mathcal D(t,A) - e^{-tA})
\|_{L^2 \to H^{\beta}(A)}
\lesssim e^{-\frac{t}{2}}
\end{equation}
with $\beta =s + k-1$.
\end{lem}

\begin{proof}
The estimates \eqref{symbol_2_3} and \eqref{symbol_2_4} immediately follows from the inequalities 
\[
\begin{split}
|\lambda^{\frac{s}{2}} \partial_t^k (\mathcal D(t,\lambda) - e^{-t\lambda})|
 \lesssim e^{-\frac{t}{2}} (1+\lambda)^{\frac{\beta}{2}}
 \quad \text{for $\lambda>1$}
\end{split}
\]
with $\beta =s + k-1$. 
The estimate \eqref{symbol_2_2} is similarly proved to \eqref{symbol_2_1}, and therefore, we will give only a proof of \eqref{symbol_2_1}.

For the case $k=0$, based on the proof of the first inequality of Lemma 2.9 in \cite{IIOW-2019}, we have 
\begin{equation}\label{cal_1}
\left|
\frac{1}{\sqrt{1-4\lambda}} -1 
\right|
\lesssim \lambda,
\end{equation}
\begin{equation}\label{cal_2}
\left|
e^{\frac{t}{2}\lambda}(e^{t(-\frac12 +\sqrt{\frac14 - \lambda})} - e^{-t\lambda} )
\right|
\lesssim 
t \lambda^2 e^{-\frac{t}{2}\lambda} \mathbf{1}_{[0,\langle t\rangle^{-\frac12}]}(\lambda)
+ e^{-\frac{t}{2}\langle t\rangle^{-\frac12}}
\mathbf{1}_{(\langle t\rangle^{-\frac12}, \infty)}(\lambda)
\end{equation}
for $0 \le \lambda \le 1/8$.
In fact, we calculate \eqref{cal_1} as
\[
\left|
\frac{1}{\sqrt{1-4\lambda}} -1 
\right|
=
\left|
\frac{1}{\sqrt{1-4\lambda}} - \frac{1+ \sqrt{1-4\lambda}}{1+ \sqrt{1-4\lambda}} 
\right|
= 
\frac{4\lambda}{\sqrt{1-4\lambda} (1+ \sqrt{1-4\lambda})} 
\lesssim \lambda
\]
for $0 \le \lambda \le 1/8$. From this, we also have 
\begin{equation}\label{cal_1'}
\frac14 \left|
1 - \sqrt{1-4\lambda}
\right|^2
=
\left(
\frac{2\lambda}{1+\sqrt{1-4\lambda}}
\right)^2.
\end{equation}
As for \eqref{cal_2}, we write 
\[
\begin{split}
\left|
e^{\frac{t}{2}\lambda}(e^{t(-\frac12 +\sqrt{\frac14 - \lambda})} - e^{-t\lambda} )
\right|
& =
e^{-\frac{t}{2}\lambda}
\left|
e^{t [ \lambda + \sqrt{\frac14 - \lambda} -\frac12]} -1
\right|\\
& =
e^{-\frac{t}{2}\lambda}
\left|
e^{-\frac{t}{4} \left|
1 - \sqrt{1-4\lambda}
\right|^2} -1
\right|\\
& =
e^{-\frac{t}{2}\lambda}
\left|
e^{
-\frac{4t\lambda^2}{(1 + \sqrt{1-4\lambda)^2}}} -1
\right|,
\end{split}
\]
where we used \eqref{cal_1'} in the last step.
When $0\le \lambda \le \langle t\rangle^{-\frac12}$ and $0\le \lambda \le 1/8$,
by the mean value theorem, we estimate 
\[
\left|
e^{
-\frac{4t\lambda^2}{(1 + \sqrt{1-4\lambda)^2}}} -1
\right|
\lesssim 
\frac{4t\lambda^2}{(1 + \sqrt{1-4\lambda)^2}} e^{-\lambda_0}
\lesssim t\lambda^2 
\]
where $\lambda_0$ is a constant depending on $t$ and $\lambda$ such that 
\[
0< \lambda_0 < \frac{4t\lambda^2}{(1 + \sqrt{1-4\lambda)^2}} \lesssim 1.
\]
Hence, we conclude
\[
e^{-\frac{t}{2}\lambda}
\left|
e^{
-\frac{4t\lambda^2}{(1 + \sqrt{1-4\lambda)^2}}} -1
\right|
\lesssim t\lambda^2 e^{-\frac{t}{2}\lambda}
\]
for $0 \le \lambda \le 1/8$ and $\lambda \le \langle t\rangle^{-\frac12}$.
On the other hand,
when $\lambda > \langle t\rangle^{-\frac12}$
and $0 \le \lambda \le 1/8$, we have
\[
e^{-\frac{t}{2}\lambda}
\left|
e^{
-\frac{4t\lambda^2}{(1 + \sqrt{1-4\lambda)^2}}} -1
\right|
\lesssim e^{-\frac{t}{2}\langle t\rangle^{-\frac12}}.
\]
Combining the above three, we obtain \eqref{cal_2}. 
By the triangle inequality, \eqref{cal_1} and \eqref{cal_2}, we have
\[
\begin{split}
\left|
e^{\frac{t}{2}\lambda}(\mathcal D(t,\lambda) - e^{-t\lambda} )
\right|
& \lesssim 
e^{-\frac{t}{2}\lambda}
\left|
\frac{1}{\sqrt{1-4\lambda}} -1 
\right|
+
\left|
e^{\frac{t}{2}\lambda}(e^{t(-\frac12 +\sqrt{\frac14 - \lambda})} - e^{-t\lambda} )
\right|\\
&\quad
+ \frac{e^{\frac{t}{2}\lambda - \frac{t}{2}-t\sqrt{1-4\lambda}}}{\sqrt{1-4\lambda}} \\
& \lesssim 
(\lambda + \langle t \rangle \lambda^2)e^{-\frac{t}{2}\lambda} + e^{-\frac{t}{2}\langle t\rangle^{-\frac12}}\mathbf{1}_{(\langle t\rangle^{-\frac12}, \infty)}(\lambda)
+ e^{-\frac{t}{4}}
\end{split}
\]
for $0 \le \lambda \le 1/8$.
Since $d\|E_A(\lambda)f\|_{L^2}^2$ is a positive measure,
we derive from this inequality that 
\[
\begin{split}
& \|
\mathbf{1}_{[0,\frac18)}(A) e^{\frac{t}{2}A}(\mathcal D(t,A) - e^{-tA})f
\|_{L^2}^2\\
& \le  
\int_0^{\frac18} |e^{\frac{t}{2}\lambda}(\mathcal D(t,\lambda) - e^{-t\lambda})|^2\, d\|E_A(\lambda) f\|_{L^2}^2\\
& \lesssim 
\int_0^{\frac18} \left|
(\lambda + \langle t \rangle \lambda^2)e^{-\frac{t}{2}\lambda} + e^{-\frac{t}{2}\langle t\rangle^{-\frac12}}\mathbf{1}_{(\langle t\rangle^{-\frac12}, \infty)}(\lambda)
+ e^{-\frac{t}{4}}
\right|^2\, d\|E_A(\lambda) f\|_{L^2}^2\\
& \lesssim 
\langle t \rangle^{-2}\|f\|_{L^2}^2,
\end{split}
\]
where we used the following inequalities in the last step:
\[
(\lambda + \langle t \rangle \lambda^2)e^{-\frac{t}{2}\lambda}
\lesssim \langle t \rangle^{-1}
\quad \text{and}\quad 
e^{-\frac{t}{2}\langle t\rangle^{-\frac12}}
\lesssim   \langle t \rangle^{-1}
\]
for any $0\le \lambda \le 1/8$ and $t\ge 0$. 

For the case $k=1$, 
we use \eqref{cal_1'} and write 
\[
\begin{split}
e^{\frac{t}{2}\lambda} \partial_t (\mathcal D(t,\lambda) - e^{-t\lambda})
& = 
-\lambda e^{-\frac{t}{2}\lambda} \left[
a(\lambda) e^{t [ \lambda + \sqrt{\frac14 - \lambda} -\frac12]} -1
\right]
+
b(\lambda) e^{-\frac{1-\lambda}{2} t - t \sqrt{\frac14 - \lambda}}\\
& = 
-\lambda e^{-\frac{t}{2}\lambda} \left[
a(\lambda) e^{
-\frac{4t\lambda^2}{(1 + \sqrt{1-4\lambda)^2}}} -1
\right]
+
b(\lambda) e^{-\frac{1-\lambda}{2} t - t \sqrt{\frac14 - \lambda}}
\end{split}
\]
with 
\[
a(\lambda) := \frac{1}{2(\frac14-\lambda) + \sqrt{\frac14 -\lambda}},
\quad 
b(\lambda):= \frac{1}{4\sqrt{\frac14 -\lambda}} + \frac12,
\]
We give only a proof of the inequality 
\begin{equation}\label{k=1:goal}
\lambda e^{-\frac{t}{2}\lambda} \left|
a(\lambda) e^{
-\frac{4t\lambda^2}{(1 + \sqrt{1-4\lambda)^2}}} -1
\right| \lesssim \langle t\rangle ^{-2}
\end{equation}
for any $0 \le \lambda \le 1/8$, $\lambda \le \langle t\rangle ^{-1}$ and $t\ge 1$. 
Indeed, the other symbol estimates have an exponential decay in $t\ge1$, 
and hence, once \eqref{k=1:goal} is shown, the same argument as in the case $k=0$ gives the required estimate \eqref{symbol_2_1} with $k=1$. 
Therefore, it is enough to show \eqref{k=1:goal}.

Set 
\[
g_t(\lambda)
:= -\frac{4t\lambda^2}{(1 + \sqrt{1-4\lambda)^2}}
\quad \text{and}\quad 
F_t(\lambda) := a(\lambda) e^{g_t(\lambda)}.
\]
Then $F_t(0) = a(0) = 1$ and it follows from the mean value theorem that
\[
\left|
a(\lambda) e^{
-\frac{4t\lambda^2}{(1 + \sqrt{1-4\lambda)^2}}} -1
\right|
=
\left|
F_t(\lambda) - F_t (0)
\right|
\le \lambda | F'_t(\lambda_0)|
\]
for some $\lambda_0 \in (0,\lambda)$ depending on $t$ and $\lambda$.
We calculate 
\[
\begin{split}
F'_t(\lambda) 
& =
a'(\lambda) e^{g_t(\lambda)} + a(\lambda) \left(\frac{d}{d\lambda} g_t(\lambda)\right) e^{g_t(\lambda)}\\
& =
a'(\lambda) e^{g_t(\lambda)} + a(\lambda) \left(
- \frac{4t\lambda}{(1+\sqrt{\frac14 - \lambda})\sqrt{\frac14-\lambda}}
\right) e^{g_t(\lambda)}.
\end{split}
\]
Noting $a(\lambda), a'(\lambda)$ are bounded in $\lambda \in [0,1/8]$ and $e^{g_t(\lambda)}$ is bounded in $t\ge1$ and $\lambda \in [0,1/8]$, we have 
\[
|F'_t(\lambda_0) |\lesssim 1 + t\lambda
\]
for $0\le \lambda \le 1/8$, $\lambda \le \langle t\rangle ^{-1}$ and $t\ge1$.
Therefore we obtain
\[
\lambda e^{-\frac{t}{2}\lambda} \left|
a(\lambda) e^{
-\frac{4t\lambda^2}{(1 + \sqrt{1-4\lambda)^2}}} -1
\right| \lesssim
\lambda e^{-\frac{t}{2}\lambda}\cdot \lambda (1 + t\lambda)
\lesssim
 \langle t\rangle ^{-2}
\]
for $0\le \lambda \le 1/8$, $\lambda \le \langle t\rangle ^{-1}$ and $t\ge1$, which proves \eqref{k=1:goal}.
The proof of Lemma \ref{lem:symbol_2} is finished.
\end{proof}

\begin{proof}[Proof of Theorem \ref{thm:Matsumura-est}]
We give only proofs of \eqref{est:q-infty-asy} and \eqref{est:q-2-asy}. 
The proofs of estimates \eqref{Mat-est1} and \eqref{Mat-est2} are similar, and so we may omit them.

Let us decompose the spectrum $\sigma(A)$ into
\[
\sigma(A) = \bigcup_{i=1}^{4} \sigma_i(A),
\]
where $\sigma_1(A) = \sigma(A) \cap [0, 1/8]$, $\sigma_2(A) = \sigma(A) \cap (1/8, 1/4]$, $\sigma_3(A) = \sigma(A) \cap (1/4,1]$ and $\sigma_4(A) = \sigma(A) \cap (1,\infty)$.
Let $f\in L^q(X) \cap H^\beta(A)$. 
Since $\partial_t^k (\mathcal D(t,\lambda) - e^{-t\lambda}) \in L^\infty(\sigma(A))$ for each $t>0$ by Lemma \ref{lem:symbol_1}, we have 
\[
\partial_t^kA^{\frac{s}{2}} (\mathcal D(t,A) - e^{-tA})f = \partial_t^k (\mathcal D(t,A) - e^{-tA}) A^{\frac{s}{2}}f \in L^2(X),
\]
and write
\[
\langle \partial_t^k A^{\frac{s}{2}} (\mathcal D(t,A) - e^{-tA}) f, g \rangle_{L^2}
=
I_1 + I_2 + I_3 + I_4
\]
for any $g \in L^1(X) \cap L^2(X)$, 
where
\[
I_i :=
\partial_t^k
\int_{\sigma_i(A)} \lambda^{\frac{s}{2}} (\mathcal D(t,\lambda) - e^{-t\lambda})\, d\langle E_A(\lambda) f, g\rangle_{L^2},\quad i=1,2,3,4.
\]

First, we give a proof of \eqref{est:q-infty-asy}. 
By the dominated convergence theorem, we write the term $I_1$ as follows:
\[
\begin{split}
I_1 
&= 
\partial_t^k
\int_{\sigma_1(A)} \lambda^{\frac{s}{2}} (\mathcal D(t,\lambda) - e^{-t\lambda})\, d\langle E_A(\lambda) f, g\rangle_{L^2}\\
&= 
\int_{\sigma_1(A)} \lambda^{\frac{s}{2}} \partial_t^k(\mathcal D(t,\lambda) - e^{-t\lambda})\, d\langle E_A(\lambda) f, g\rangle_{L^2}\\
& = 
\int_{\sigma(A)} \lambda^{\frac{s}{2}} e^{-\frac{t}{4} \lambda} \mathbf{1}_{[0,\frac18]}(\lambda) \cdot \mathbf{1}_{[0,\frac18]}(\lambda) e^{\frac{t}{2}\lambda} \partial_t^k (\mathcal D(t,\lambda) - e^{-t\lambda}) \mathbf{1}_{[0,\frac18]}(\lambda) e^{-\frac{t}{4}\lambda}\, d\langle E_A(\lambda) f, g\rangle_{L^2}\\
& =
\langle A^{\frac{s}{2}} e^{-\frac{t}{4} A} \mathbf{1}_{[0,\frac18]}(A) \mathcal D_1(t,A) \mathbf{1}_{[0,\frac18]}(A) e^{-\frac{t}{4} A} f, g \rangle_{L^2},
\end{split}
\]
where 
\[
\mathcal D_1(t,\lambda) :=  \mathbf{1}_{[0,\frac18]}(\lambda) e^{\frac{t}{2}\lambda} \partial_t^k(\mathcal D(t,\lambda) - e^{-t\lambda}).
\]
Then, by Lemmas \ref{lem:LpLq} and \ref{lem:symbol_2}, we estimate
\[
\begin{split}
|I_1| 
& \le \|A^{\frac{s}{2}} e^{-\frac{t}{4} A} \mathbf{1}_{[0,\frac18]}(A) \mathcal D_1(t,A) \mathbf{1}_{[0,\frac18]}(A) e^{-\frac{t}{4} A} f\|_{L^\infty(X)}\|g\|_{L^1(X)}\\
& \le \| A^{\frac{s}{2}} e^{-\frac{t}{4} A} \mathbf{1}_{[0,\frac18]}(A) \|_{L^2\to L^\infty}
\|\mathcal D_1(t,A)\|_{L^2 \to L^2}
\|\mathbf{1}_{[0,\frac18]}(A) e^{-\frac{t}{4} A}\|_{L^q \to L^2}
\|f\|_{L^q}\|g\|_{L^1}\\
& \lesssim  \langle t \rangle^{-\alpha-\frac{s}{2}} \cdot \langle t \rangle^{-1-k}\cdot \langle t \rangle^{-2\alpha (\frac{1}{q}-\frac12)} \|f\|_{L^q} \|g\|_{L^1}\\
& \lesssim  \langle t \rangle^{-\frac{2\alpha}{q}-\frac{s}{2}-k-1} \|f\|_{L^q} \|g\|_{L^1}
\end{split}
\]
for any $f \in L^q(X)$ and $g \in L^1(X)\cap L^2(X)$.
Similarly, we have
\[
|I_2| \lesssim  \langle t \rangle^{-\frac{2\alpha}{q}-\frac{s}{2}-k-1}\|f\|_{L^q} \|g\|_{L^1}.
\]
Next, we estimate the term $I_4$. By the dominated convergence theorem, we write
\[
\begin{split}
I_4 
& = 
\partial_t^k
\int_{\sigma_4(A)} \lambda^{\frac{s}{2}} (\mathcal D(t,\lambda) - e^{-t\lambda}) \, d\langle E_A(\lambda) f, g\rangle_{L^2}\\
& = 
\int_{\sigma(A)} \lambda^{\frac{s}{2}}\mathbf{1}_{(1,\infty)}(\lambda) \partial_t^k (\mathcal D(t,\lambda) - e^{-t\lambda})\, d\langle E_A(\lambda) f, g\rangle_{L^2}\\
& = 
\langle \mathcal D_4(t,A) 
 f, g \rangle_{L^2},
\end{split}
\]
where 
\[
\mathcal D_4(t,\lambda):= \lambda^{\frac{s}{2}} \mathbf{1}_{(1,\infty)}(\lambda) \partial_t^k (\mathcal D(t,\lambda) - e^{-t\lambda}).
\]
By Lemma \ref{lem:Sobolev}, we have
\[
|I_4|
\le \|\mathcal D_4(t,A) f\|_{L^\infty}\|g\|_{L^1} \lesssim  \|\mathcal D_4(t,A) f\|_{H^{s_0}(A)}\|g\|_{L^1}
\]
for $s_0>2\alpha$. 
By Lemma \ref{lem:symbol_2},
we estimate
\[
\begin{split}
\|\mathcal D_4(t,A) f\|_{H^{s_0}(A)}
\lesssim e^{-\frac{t}{2}}\|f\|_{H^{\beta}(A)}
\end{split}
\]
with $\beta = s_0 +s + k-1$.
Hence, 
\[
|I_4| \lesssim  e^{-\frac{t}{2}}\|f\|_{H^{\beta}(A)}\|g\|_{L^1}.
\]
Similarly, we have
\[
|I_3| \lesssim  e^{-\frac{t}{4}}\|f\|_{H^{\beta}(A)}\|g\|_{L^1}.
\]
Summarizing the estimates for $I_1$, $I_2$, $I_3$ and $I_4$, we conclude \eqref{est:q-infty-asy}. 

Next, we give a proof of \eqref{est:q-2-asy}.
By Lemmas \ref{lem:LpLq} and \ref{lem:symbol_2}, we estimate $I_1$ and $I_4$ as
\[
\begin{split}
|I_1| 
& \le \|A^{\frac{s}{2}} e^{-\frac{t}{4} A} \mathbf{1}_{[0,\frac18]}(A) \mathcal D_1(t,A) \mathbf{1}_{[0,\frac18]}(A) e^{-\frac{t}{4} A} f\|_{L^2(X)}\|g\|_{L^2(X)}\\
& \le \| A^{\frac{s}{2}} e^{-\frac{t}{4} A} \mathbf{1}_{[0,\frac18]}(A) \|_{L^2\to L^2}
\|\mathcal D_1(t,A)\|_{L^2 \to L^2}
\|\mathbf{1}_{[0,\frac18]}(A) e^{-\frac{t}{4} A}\|_{L^q \to L^2}
\|f\|_{L^q}\|g\|_{L^2}\\
& \lesssim  \langle t \rangle^{-\frac{s}{2}} \cdot \langle t \rangle^{-1-k}\cdot \langle t \rangle^{-2\alpha (\frac{1}{q}-\frac12)} \|f\|_{L^q} \|g\|_{L^1}\\
& \lesssim  \langle t\rangle^{-2\alpha(\frac{1}{q} -\frac12) - k - \frac{s}{2}-1}  \|f\|_{L^q} \|g\|_{L^2}
\end{split}
\]
for any $f \in L^q(X)$ and $g \in L^2(X)$, 
and
\[
|I_4| 
 \le \|\mathcal D_4(t,A) f\|_{L^2}\|g\|_{L^2}
 \lesssim e^{-\frac{t}{2}} \|f\|_{H^{s + k-1}(A)}\|g\|_{L^2}
\]
for any $f \in H^{s + k-1}(A)$ and $g \in L^2(X)$.
The estimates for $I_2$ and $I_3$ are similar. 
The proof of Theorem \ref{thm:Matsumura-est} is complete.
\end{proof}

%%%%%%%%%%%%%%%%%%%%%%%%%%%%%%%%%%%%
%%%%%%% %%%%%%    Section 4    %%%%%%%%%%%%%%%%%%
%%%%%%%%%%%%%%%%%%%%%%%%%%%%%%%%%%%%

\section{Proof of Theorem \ref{thm:sdge}}
Throughout this section, we suppose that $A$ satisfies Assumptions \ref{assum:A} and \ref{assum:B}. 
Based on the result on 
the local existence of mild solutions to the problem \eqref{ndw} (Proposition \ref{prop:local}), 
we will prove Theorem \ref{thm:sdge}. 
The proof of Proposition \ref{prop:local} is based on the standard fixed point argument and uses the contraction property \eqref{condi-F} of $F$, and so we may omit the proof. \\

To prove Theorem \ref{thm:sdge}, 
we aim to prove a priori estimate
\[
\|u\|_{X(T)} \le C
\]
for some constant $C>0$ independent of $T$, where
\[
\begin{split}
\|\phi\|_{X(T)} :=
\sup_{t\in (0,T)} 
\Big\{
& \langle t\rangle^{2\alpha ( \frac{1}{q} - \frac12) + 1}
(\log(2+t))^{-\delta} \|\partial_t \phi(t)\|_{L^2} \\
& 
+ \langle t\rangle^{2\alpha ( \frac{1}{q} - \frac12) + \frac12}
\|A^\frac12 \phi(t)\|_{L^2}
+
\langle t\rangle^{2\alpha ( \frac{1}{q} - \frac12)}
\|\phi(t)\|_{L^2}
\Big\}
\end{split}
\]
with
\begin{equation}\label{delta}
\delta = 
\begin{cases}
0\quad &\text{if } q < 2\alpha (p-1),\\
1 & \text{if } q = 2\alpha (p-1).
\end{cases}
\end{equation}

Then, we have the following estimate on
$X(T)$.
\begin{lem}\label{lem:interpolation}
For any $r\in [2,\infty]$ satisfying $\alpha (1/2 - 1/r) \le 1/4$
except for $(r,\alpha) = (\infty,1/2)$,
we have
\[
\sup_{t \in (0,T)} 
\langle t \rangle^{2\alpha (\frac{1}{q} - \frac{1}{r})}\|\phi(t)\|_{L^r} 
\lesssim \|\phi\|_{X(T)}.
\]
Here, the implicit constant does not
depend on $T$.
\end{lem}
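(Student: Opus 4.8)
The plan is to interpolate the $L^r$ norm between $L^2$ and an endpoint space using the Gagliardo--Nirenberg inequality (Lemma \ref{lem:GN}), or the critical Sobolev inequality (Lemma \ref{lem:cS}) when $\alpha(1/2-1/r)=1/4$ with $\alpha>1/2$, and then read off the time decay from the definition of $\|\cdot\|_{X(T)}$. First I would dispose of the trivial case $r=2$: there the claimed bound is immediate from the third term in the definition of $\|\phi\|_{X(T)}$. So assume $r>2$.

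Next, when $\alpha(1/2-1/r)<1/4$, apply Lemma \ref{lem:GN} with exponent $r$ to $\phi(t)$:
\[
\|\phi(t)\|_{L^r} \lesssim \|\phi(t)\|_{L^2}^{1-4\alpha(\frac12-\frac1r)} \|A^{\frac12}\phi(t)\|_{L^2}^{4\alpha(\frac12-\frac1r)}.
\]
Write $\theta := 4\alpha(1/2-1/r) \in [0,1)$. From the definition of $\|\phi\|_{X(T)}$ we have $\|\phi(t)\|_{L^2}\lesssim \langle t\rangle^{-2\alpha(1/q-1/2)}\|\phi\|_{X(T)}$ and $\|A^{\frac12}\phi(t)\|_{L^2}\lesssim \langle t\rangle^{-2\alpha(1/q-1/2)-1/2}\|\phi\|_{X(T)}$. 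Plugging these in and collecting the powers of $\langle t\rangle$, the exponent becomes
\[
-2\alpha\Big(\tfrac1q-\tfrac12\Big) - \tfrac{\theta}{2}
= -2\alpha\Big(\tfrac1q-\tfrac12\Big) - 2\alpha\Big(\tfrac12-\tfrac1r\Big)
= -2\alpha\Big(\tfrac1q-\tfrac1r\Big),
\]
which is exactly the weight in the statement; multiplying through by $\langle t\rangle^{2\alpha(1/q-1/r)}$ and taking the supremum over $t\in(0,T)$ gives the claim, with a constant independent of $T$ since all the constants above are.

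Finally, in the borderline case $\alpha(1/2-1/r)=1/4$ (which forces $\alpha>1/2$ and $r=4\alpha/(2\alpha-1)$, and is excluded only when $(r,\alpha)=(\infty,1/2)$), the Gagliardo--Nirenberg inequality fails, so instead use Lemma \ref{lem:cS}: $\|\phi(t)\|_{L^{r}} = \|\phi(t)\|_{L^{4\alpha/(2\alpha-1)}} \lesssim \|A^{\frac12}\phi(t)\|_{L^2} \lesssim \langle t\rangle^{-2\alpha(1/q-1/2)-1/2}\|\phi\|_{X(T)}$. Since here $2\alpha(1/q-1/2)+1/2 = 2\alpha(1/q-1/2)+2\alpha(1/2-1/r) = 2\alpha(1/q-1/r)$, this again matches the required weight, and taking the supremum finishes the proof. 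The only mild subtlety is bookkeeping the exponent arithmetic and checking that the hypothesis $\alpha(1/2-1/r)\le 1/4$ is precisely what makes Lemma \ref{lem:GN} (strict inequality) or Lemma \ref{lem:cS} (equality) applicable; there is no real analytic obstacle.
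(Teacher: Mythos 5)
Your proof is correct and follows exactly the route the paper takes: the paper's own proof simply states that the lemma "immediately follows from Lemmas \ref{lem:GN} and \ref{lem:cS} and the definition of $X(T)$," and your write-up supplies precisely that interpolation together with the correct exponent bookkeeping in both the subcritical and critical cases.
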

\begin{proof}
The assertion immediately follows from Lemmas \ref{lem:GN} and \ref{lem:cS} and the definition of $X(T)$. 
\end{proof}

To prepare the estimate for the Duhamel term, we define
\[
\|\psi\|_{Y(T)} :=
\sup_{t\in (0,T)} 
\left\{
\langle t\rangle^{2\alpha ( \frac{1}{q} - \frac{1}{2p})p}
\|\psi(t)\|_{L^2} + 
\langle t\rangle^{2\alpha ( \frac{1}{q} - \frac{1}{\kappa p})p}
\|\psi(t)\|_{L^\kappa}
\right\}
\]
with
$\kappa = \max \{1, 2/p\}$.

\begin{lem}\label{lem:keyest1}
Let $q \in [1,2]$ and $p\in (1,\infty)$ satisfy \eqref{supercritical}. 
Then 
\[
\left\| 
\int_0^t \mathcal{D}(t - \tau, A) \psi(\tau)\, d\tau
\right\|_{X(T)}
\lesssim \|\psi\|_{Y(T)}.
\]
\end{lem}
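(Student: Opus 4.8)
The plan is to estimate each of the three norms defining $\|\cdot\|_{X(T)}$ applied to the Duhamel term $v(t) := \int_0^t \mathcal{D}(t-\tau,A)\psi(\tau)\,d\tau$, namely $\|v(t)\|_{L^2}$, $\|A^{1/2}v(t)\|_{L^2}$ and $\|\partial_t v(t)\|_{L^2}$ (the last after differentiating under the integral, which produces the boundary term $\psi(t)$ since $\mathcal D(0,A)=0$ but $\partial_t\mathcal D(0,A)=I$, plus $\int_0^t \partial_t\mathcal D(t-\tau,A)\psi(\tau)\,d\tau$). For each of these I would split the $\tau$-integral at $\tau = t/2$. On the low half $\tau\in(0,t/2)$ one has $t-\tau\sim t\gtrsim 1$, so the linear estimate \eqref{Mat-est2} from Theorem~\ref{thm:Matsumura-est} applies with decay $\langle t-\tau\rangle^{-2\alpha(\frac1{q}-\frac12)-k-\frac{s}{2}}\sim\langle t\rangle^{-2\alpha(\frac1{q}-\frac12)-k-\frac{s}{2}}$ (the $e^{-(t-\tau)/4}\|\cdot\|_{H^{k+s-1}(A)}$ tail is harmless), pulled outside, and the remaining $\int_0^{t/2}\|\psi(\tau)\|_{L^\sigma}\,d\tau$ is controlled using $\|\psi(\tau)\|_{L^\sigma}\lesssim \langle\tau\rangle^{-2\alpha(\frac1{q}-\frac1{\sigma p})p}\|\psi\|_{Y(T)}$; here one takes $q_1 = \sigma$ in \eqref{Mat-est2}. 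On the high half $\tau\in(t/2,t)$ the kernel $\mathcal D(t-\tau,A)$ may be evaluated at small time, so instead I would use the crude bounds: for the $L^2$-norm, $\|\mathcal D(t-\tau,A)\|_{L^2\to L^2}\lesssim 1$ (Lemma~\ref{lem:symbol_1}) together with $\|\psi(\tau)\|_{L^2}\lesssim\langle\tau\rangle^{-2\alpha(\frac1q-\frac1{2p})p}\|\psi\|_{Y(T)}$, integrate over the short interval of length $t/2$; for $A^{1/2}v$ and $\partial_t v$, use the short-time smoothing \eqref{Mat-est2} with $q_1=2$ (giving $\langle t-\tau\rangle^{-\frac12}$, respectively $\langle t-\tau\rangle^{-1}$), which is integrable near $\tau=t$, again paired with the $L^2$-decay of $\psi$.

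The arithmetic to check is that in every case the resulting power of $\langle t\rangle$ matches the weight in the definition of $\|v\|_{X(T)}$. Writing $\theta := 2\alpha(\frac1q-\frac12)$, the target exponents are $\theta$, $\theta+\frac12$, $\theta+1$ for the three norms. For the low-half contribution to $\|v\|_{L^2}$ one gets $\langle t\rangle^{-\theta}\int_0^{t/2}\langle\tau\rangle^{-2\alpha(\frac1q-\frac1{\sigma p})p}\,d\tau$; the integral converges (uniformly in $t$) precisely when $2\alpha(\frac1q-\frac1{\sigma p})p>1$, and one checks from \eqref{supercritical} and $\sigma=\max\{1,2/p\}$ that this holds — with $\sigma p = \max\{p,2\}\ge 2$ so $\frac1{\sigma p}\le\frac12$ and $2\alpha(\frac1q-\frac1{\sigma p})p \ge 2\alpha(\frac1q-\frac12)p$, while $\frac1q\le 1$ and $p>p_F(\alpha,1)=1+\frac1{2\alpha}$ gives the needed strict inequality; the borderline $q=2\alpha(p-1)$ is exactly where this integral is logarithmically divergent, producing the factor $(\log(2+t))^\delta$ in the $\partial_t v$ estimate. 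For the high-half contribution to $\|v\|_{L^2}$, $\int_{t/2}^t \langle\tau\rangle^{-2\alpha(\frac1q-\frac1{2p})p}\,d\tau \lesssim \langle t\rangle^{1-2\alpha(\frac1q-\frac1{2p})p}$, and one must verify $1-2\alpha(\frac1q-\frac1{2p})p\le -\theta = -2\alpha(\frac1q-\frac12)$, i.e. $2\alpha p(\frac1q-\frac1{2p})\ge 1+2\alpha(\frac1q-\frac12)$, which simplifies (using $\frac1q-\frac1{2p}=\frac1q-\frac1{2p}$) to $(p-1)(2\alpha\frac1q)\ge 1+2\alpha\frac1q-\alpha-1+\dots$; again this reduces to $q\le 2\alpha(p-1)$ together with $q\ge 2/p$, both in \eqref{supercritical}. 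The $A^{1/2}$ and $\partial_t$ estimates gain an extra $\frac12$ respectively $1$ from the short-time smoothing factor $\langle t-\tau\rangle^{-1/2}$, $\langle t-\tau\rangle^{-1}$, matching the heavier weights; in the $\partial_t$ case the worst (critical) term forces the $\log$, and the boundary term $\|\psi(t)\|_{L^2}\lesssim\langle t\rangle^{-2\alpha(\frac1q-\frac1{2p})p}\|\psi\|_{Y(T)}$ is dominated by $\langle t\rangle^{-\theta-1}(\log(2+t))^\delta$ by the same inequality.

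The main obstacle is bookkeeping: making sure the exponent conditions needed for (a) convergence of the low-half integral, (b) the high-half integral producing the right power, (c) the admissibility of the exponent pairs $(\sigma,2)$, $(2,2)$ in the linear estimate \eqref{Mat-est2} (one needs $1\le q_1\le 2$, automatic since $\sigma\in\{1,2/p\}$ with $p>1$ forces $\sigma\le 2$, and $2/p\le 2$), and (d) the interpolation / Sobolev requirements when later feeding this back through Lemma~\ref{lem:interpolation} — all follow from the single hypothesis \eqref{supercritical}, with the endpoint $q=2\alpha(p-1)$ accounting for $\delta=1$. I do not anticipate any genuinely hard analytic point beyond carefully tracking these inequalities and the logarithmic borderline case; the estimates on $\mathcal D$ are entirely supplied by Theorem~\ref{thm:Matsumura-est} and Lemma~\ref{lem:symbol_1}.
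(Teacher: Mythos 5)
Your proposal is correct and follows essentially the same route as the paper: split the Duhamel integral at $\tau=t/2$, apply the Matsumura estimate \eqref{Mat-est2} with $q_1=\sigma$ on the low half and the uniform $L^2\to L^2$ bounds (with the extra $\langle t-\tau\rangle^{-1/2}$, resp.\ $\langle t-\tau\rangle^{-1}$, smoothing for $A^{1/2}v$ and $\partial_t v$) on the high half, with the logarithm appearing exactly at $q=2\alpha(p-1)$ and compensated by $\delta=1$. One harmless slip: differentiating the Duhamel term produces the boundary term $\mathcal D(0,A)\psi(t)$, which vanishes because $\mathcal D(0,\lambda)=0$, so there is in fact no $\psi(t)$ term to estimate (the bound you give for it would be fine anyway).
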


\begin{proof}
First, we estimate the third quantity of the Duhamel term in $X(T)$, i.e.,
\[
\langle t\rangle^{2\alpha ( \frac{1}{q} - \frac12)}
\left\|\int_0^t \mathcal{D}(t - \tau, A) \psi(\tau)\, d\tau\right\|_{L^2}.
\]
This is bounded from above by 
\[
\langle t\rangle^{2\alpha ( \frac{1}{q} - \frac12)}
\left(\int_0^\frac{t}{2} + \int_\frac{t}{2}^t\right)
\left\| \mathcal{D}(t - \tau, A) \psi(\tau)\right\|_{L^2}\, d\tau\\
 =: I + II.
\]
By \eqref{Mat-est2} from Theorem \ref{thm:Matsumura-est} and the embedding $L^2(X) \hookrightarrow H^{-1}(A)$, we estimate
\[
\begin{split}
I 
& \lesssim 
\langle t\rangle^{2\alpha ( \frac{1}{q} - \frac12)}
\int_0^\frac{t}{2} 
\left(
\langle t-\tau\rangle^{-2\alpha ( \frac{1}{\kappa} - \frac12)}
\|\psi(\tau)\|_{L^\kappa}
+ e^{-\frac{t-\tau}{4}}\|\psi(\tau)\|_{H^{-1}(A)}
\right)\, d\tau\\
& \lesssim 
\langle t\rangle^{-2\alpha ( \frac{1}{\kappa} - \frac{1}{q})}
\int_0^\frac{t}{2} 
\langle \tau\rangle^{-2\alpha ( \frac{1}{q} - \frac{1}{\kappa p})p}
\, d\tau \cdot 
\|\psi\|_{Y(T)}\\
& \qquad
+ e^{-\frac{t}{8}}\langle t\rangle^{2\alpha ( \frac{1}{q} - \frac12)}
\int_0^\frac{t}{2} 
\langle \tau\rangle^{-2\alpha ( \frac{1}{q} - \frac{1}{2p})p}
\, d\tau \cdot 
\|\psi\|_{Y(T)}.
\end{split}
\]
Here, the assumption
\eqref{supercritical}
implies that,
when $\kappa < q$,
\begin{align*}
    \langle t\rangle^{-2\alpha ( \frac{1}{\kappa} - \frac{1}{q})}
    \int_0^\frac{t}{2} 
    \langle \tau\rangle^{-2\alpha ( \frac{1}{q} - \frac{1}{\kappa p})p} \, d\tau
    &\lesssim
    \begin{cases}
        \langle t \rangle^{-2\alpha\left( \frac{1}{\kappa}-\frac{1}{q} \right)}
        &\left( -2\alpha \left( \frac{1}{q}-\frac{1}{\kappa p} \right)p < -1 \right),\\
        \langle t \rangle^{-2\alpha\left( \frac{1}{\kappa}-\frac{1}{q} \right)}
        \log(2+t)
        &\left( -2\alpha \left( \frac{1}{q}-\frac{1}{\kappa p} \right)p = -1 \right),\\
        \langle t \rangle^{-\frac{2\alpha}{q} (p-1) + 1}
        &\left( -2\alpha \left( \frac{1}{q}-\frac{1}{\kappa p} \right)p > -1 \right)
    \end{cases}\\
    &\lesssim 1
\end{align*}
holds; when
$\kappa = q$,
\begin{align*}
     \langle t\rangle^{-2\alpha ( \frac{1}{\kappa} - \frac{1}{q})}
    \int_0^\frac{t}{2} 
    \langle \tau\rangle^{-2\alpha ( \frac{1}{q} - \frac{1}{\kappa p})p} \, d\tau
    &=
    \int_0^{\frac{t}{2}} \langle \tau \rangle^{-\frac{2\alpha}{q}(p-1)} \,d\tau
    \lesssim 1.
\end{align*}
Here, we note that
$\kappa = q = \max\{ 1, \frac{2}{p} \}$
and the conditions
$1+\frac{1}{2\alpha} < p$
and
$\frac{2}{p} < 2\alpha (p-1)$
lead to
$q < 2\alpha (p-1)$.
Therefore, we conclude
\begin{align*}
    I \lesssim \|\psi\|_{Y(T)}.
\end{align*}
Next, we have
\[
\begin{split}
II 
& \lesssim 
\langle t\rangle^{2\alpha ( \frac{1}{q} - \frac12)}
\int_\frac{t}{2}^t
\|\psi(\tau)\|_{L^2}\, d\tau\\
& \lesssim 
\langle t\rangle^{2\alpha ( \frac{1}{q} - \frac12)}
\int_\frac{t}{2}^t
\langle \tau\rangle^{-2\alpha ( \frac{1}{q} - \frac{1}{2p})p}
\, d\tau \cdot 
\|\psi\|_{Y(T)}\\
& \lesssim \|\psi\|_{Y(T)},
\end{split}
\]
where we used
\eqref{supercritical}.
Thus, we obtain 
\[
\langle t\rangle^{2\alpha ( \frac{1}{q} - \frac12)}
\left\|\int_0^t \mathcal{D}(t - \tau, A) \psi(\tau)\, d\tau\right\|_{L^2}
\lesssim \|\psi\|_{Y(T)}.
\]
Similarly, the second quantity in $X(T)$ is estimated by  
\[
\langle t\rangle^{2\alpha ( \frac{1}{q} - \frac12) + \frac12}
\left\| A^\frac12
\int_0^t \mathcal{D}(t - \tau, A) \psi(\tau)\, d\tau
\right\|_{L^2}
\lesssim
\|\psi\|_{Y(T)}.
\]

Finally, we will estimate the first quantity in $X(T)$, i.e.,
\[
\langle t\rangle^{2\alpha ( \frac{1}{q} - \frac12) + 1}
(\log(2+t))^{-\delta} \left\|\partial_t \int_0^t \mathcal{D}(t - \tau, A) \psi(\tau)\, d\tau\right\|_{L^2}.
\]
This is bounded from above by 
\[
\langle t\rangle^{2\alpha ( \frac{1}{q} - \frac12) + 1}
(\log(2+t))^{-\delta} \left( \int_0^\frac{t}{2} + \int_\frac{t}{2}^t \right)
\left\|\partial_t \mathcal{D}(t - \tau, A) \psi(\tau)\right\|_{L^2}\, d\tau =: III + IV.
\]
By Theorem \ref{thm:Matsumura-est}, we estimate
\[
\begin{split}
III 
& \lesssim 
\langle t\rangle^{2\alpha ( \frac{1}{q} - \frac12) + 1}
(\log(2+t))^{-\delta} \\
& \qquad \times 
\int_0^\frac{t}{2} 
\left(
\langle t-\tau\rangle^{-2\alpha ( \frac{1}{\kappa} - \frac12)-1}
\|\psi(\tau)\|_{L^\kappa}
+ e^{-\frac{t-\tau}{4}}\|\psi(\tau)\|_{L^2}
\right)\, d\tau\\
& \lesssim 
\langle t\rangle^{-2\alpha ( \frac{1}{\kappa} - \frac{1}{q})}(\log(2+t))^{-\delta} 
\int_0^\frac{t}{2} 
\langle \tau\rangle^{-2\alpha ( \frac{1}{q} - \frac{1}{\kappa p})p}
\, d\tau \cdot 
\|\psi\|_{Y(T)}\\
& \qquad
+ e^{-\frac{t}{8}}\langle t\rangle^{2\alpha ( \frac{1}{q} - \frac12)}(\log(2+t))^{-\delta} 
\int_0^\frac{t}{2} 
\langle \tau\rangle^{-2\alpha ( \frac{1}{q} - \frac{1}{2p})p}
\, d\tau \cdot 
\|\psi\|_{Y(T)}.
\end{split}
\]
In the same way as in the estimate of $I$,
we have
\begin{align*}
    III \lesssim \|\psi\|_{Y(T)}.
\end{align*}
Finally, applying Theorem \ref{thm:Matsumura-est}, we have
\[
\begin{split}
IV 
& \lesssim
\langle t\rangle^{2\alpha ( \frac{1}{q} - \frac12) + 1}
(\log(2+t))^{-\delta} 
\int_\frac{t}{2}^t
(\langle t-\tau\rangle^{-1} + e^{-\frac{t-\tau}{4}}) \|\psi(\tau)\|_{L^2}\, d\tau\\
& \lesssim
\langle t\rangle^{2\alpha ( \frac{1}{q} - \frac12) + 1}
(\log(2+t))^{-\delta} 
\int_\frac{t}{2}^t
\langle t-\tau\rangle^{-1} 
\langle \tau\rangle^{-2\alpha ( \frac{1}{q} - \frac{1}{2 p})p}
\, d\tau \cdot \|\psi\|_{Y(T)}\\
& \lesssim
\langle t\rangle^{-\frac{2\alpha}{q} (p-1) + 1} (\log(2+t))^{1-\delta} \cdot \|\psi\|_{Y(T)}.
\end{split}
\]
From the definition of
$\delta$ in \eqref{delta},
one has
\begin{align*}
	\langle t\rangle^{-\frac{2\alpha}{q} (p-1) + 1} (\log(2+t))^{1-\delta}
	&=
	\begin{cases}
	\langle t\rangle^{-\frac{2\alpha}{q} (p-1) + 1} (\log(2+t))
	&(q < 2\alpha (p-1)),\\
	1
	&(q = 2\alpha (p-1)),
	\end{cases}
\end{align*}
which yields
\begin{align*}
	IV \lesssim  \|\psi\|_{Y(T)}.
\end{align*}
Thus, we conclude Lemma~\ref{lem:keyest1}.
\end{proof}

By combining 
the definitions of $X(T)$ and $Y(T)$ and
Lemma \ref{lem:interpolation},
we have the following statement.

\begin{lem}\label{lem:keyest2}
Let $p \in (1,\infty)$ satisfy \eqref{additional-assum}.
Then
\[
\big\| F(u) \big\|_{Y(T)}
\lesssim \|u\|_{X(T)}^p.
\]
\end{lem}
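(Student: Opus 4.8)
The plan is to reduce the statement to Lemma~\ref{lem:interpolation} by means of the elementary identity $\| |u(t)|^p \|_{L^r} = \|u(t)\|_{L^{rp}}^p$, valid for every $r\in[1,\infty]$. Applying this with $r=2$ and with $r=\sigma$ rewrites the two terms defining $\| |u|^p \|_{Y(T)}$, namely $\langle t\rangle^{2\alpha(\frac1q-\frac1{2p})p}\| |u(t)|^p \|_{L^2}$ and $\langle t\rangle^{2\alpha(\frac1q-\frac1{\sigma p})p}\| |u(t)|^p \|_{L^\sigma}$, as the $p$-th powers of $\langle t\rangle^{2\alpha(\frac1q-\frac1{2p})}\|u(t)\|_{L^{2p}}$ and $\langle t\rangle^{2\alpha(\frac1q-\frac1{\sigma p})}\|u(t)\|_{L^{\sigma p}}$, respectively. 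So it is enough to show that $r=2p$ and $r=\sigma p$ are admissible exponents in Lemma~\ref{lem:interpolation}.

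First I would check the range conditions. Since $p>1$ we have $2p>2$, and because $\sigma=\max\{1,2/p\}$ one has $\sigma p=2$ when $1<p<2$ and $\sigma p=p\ge2$ when $p\ge2$; in both cases $\sigma p\ge2$. Both exponents are finite because $p<\infty$, so the excluded pair $(r,\alpha)=(\infty,1/2)$ never arises. It then remains to verify $\alpha(\tfrac12-\tfrac1{2p})\le\tfrac14$ and $\alpha(\tfrac12-\tfrac1{\sigma p})\le\tfrac14$. When $\alpha\le1/2$ both inequalities hold (in fact strictly, since $p<\infty$). When $\alpha>1/2$, a direct computation gives $\alpha(\tfrac12-\tfrac1{2p})\le\tfrac14 \iff p\le\frac{2\alpha}{2\alpha-1}$, which is exactly the hypothesis \eqref{additional-assum}; and for the other exponent, $\alpha(\tfrac12-\tfrac1{\sigma p})=0$ when $1<p<2$, while for $p\ge2$ we get $\alpha(\tfrac12-\tfrac1{\sigma p})=\alpha(\tfrac12-\tfrac1p)<\tfrac14$ because $p\le\frac{2\alpha}{2\alpha-1}<\frac{4\alpha}{2\alpha-1}$.

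With these verifications, Lemma~\ref{lem:interpolation} yields $\sup_{t\in(0,T)}\langle t\rangle^{2\alpha(\frac1q-\frac1{2p})}\|u(t)\|_{L^{2p}}\lesssim\|u\|_{X(T)}$ and $\sup_{t\in(0,T)}\langle t\rangle^{2\alpha(\frac1q-\frac1{\sigma p})}\|u(t)\|_{L^{\sigma p}}\lesssim\|u\|_{X(T)}$, with constants independent of $T$. Raising each of these bounds to the $p$-th power, applying the identity $\| |u(t)|^p \|_{L^r}=\|u(t)\|_{L^{rp}}^p$ with $r=2$ and $r=\sigma$, adding the two resulting estimates, and taking the supremum over $t\in(0,T)$ then gives $\| |u|^p \|_{Y(T)}\lesssim\|u\|_{X(T)}^p$, which is the claim.

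The computations involved are entirely routine; the only point that requires care is the matching of the exponent conditions in the step above, where \eqref{additional-assum} — and, in the borderline case $p=2\alpha/(2\alpha-1)$, Assumption~\ref{assum:B} entering through the critical Sobolev inequality of Lemma~\ref{lem:cS} that underlies Lemma~\ref{lem:interpolation} — is precisely what is needed to bring $L^{2p}$ within the range covered by Lemma~\ref{lem:interpolation}.
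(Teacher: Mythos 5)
Your proposal is correct and follows essentially the same route as the paper: rewrite $\| |u(t)|^p\|_{L^r}=\|u(t)\|_{L^{rp}}^p$ for $r=2,\sigma$ and invoke Lemma~\ref{lem:interpolation}, with \eqref{additional-assum} guaranteeing $\alpha(\tfrac12-\tfrac{1}{2p})\le\tfrac14$ and $\alpha(\tfrac12-\tfrac{1}{\sigma p})\le\tfrac14$. Your verification of the exponent conditions is in fact slightly more detailed than the paper's, and it is accurate.
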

\begin{proof}
The assumptions
$p >1$ and \eqref{additional-assum} imply
$2p \in (2, \infty)$ and 
$\alpha (\frac{1}{2} - \frac{1}{2p} ) \le \frac{1}{4}$.
Therefore, by Lemma \ref{lem:interpolation}, we have
\begin{align*}
	\langle t\rangle^{2\alpha ( \frac{1}{q} - \frac{1}{2p})p}
	\| |u(t)|^p \|_{L^2}
	=
	\left(
	\langle t\rangle^{2\alpha ( \frac{1}{q} - \frac{1}{2p} )}
	\| u(t) \|_{L^{2p}}
	\right)^p
	\lesssim
	\| u \|_{X(T)}^p.
\end{align*}
Similarly, recalling $\kappa = \max \{1, 2/p\}$
and the assumption \eqref{additional-assum}, we can easily check that
$\kappa p \in [2,\infty)$
and
$\alpha (\frac{1}{2} - \frac{1}{\kappa p} ) \le \frac{1}{4}$ hold.
Thus, Lemma \ref{lem:interpolation} leads to 
\begin{align*}
	\langle t\rangle^{2\alpha ( \frac{1}{q} - \frac{1}{\kappa p})p}
	\| |u(t)|^p \|_{L^{\kappa}}
	=
	\left(
	\langle t\rangle^{2\alpha ( \frac{1}{q} - \frac{1}{\kappa p})}
	\| u(t) \|_{L^{\kappa p}}
	\right)^p
	\lesssim
	\| u \|_{X(T)}^p.
\end{align*}
The proof is complete.
\end{proof}

We are now in a position to prove Theorem \ref{thm:sdge}. 

\begin{proof}[Proof of Theorem \ref{thm:sdge}]
Let $(u_0, u_1) \in (H^1(A) \cap L^q(X) )\times (L^2(X)\cap L^q(X))$ satisfy
\[
I_{0} = I_{0}(u_0,u_1) := \|u_0\|_{L^q} + \|u_0\|_{H^1(A)} +  \|u_1\|_{L^q} + \|u_1\|_{L^2} \le \epsilon_0.
\]
Suppose $T_{\max} < \infty$. By Proposition \ref{prop:local}, we have
\begin{equation}\label{contra}
\lim_{t \to T_{\max}}
\left(
\|u(t)\|_{H^1(A)} + \|\partial_t u(t)\|_{L^2} 
\right)=\infty.
\end{equation}
On the other hand, 
let $T \in (0, T_{\max})$, and we write
\[
u(t) = u_L(t) + \int_0^t \mathcal D(t-\tau, A)(F(u(\tau)))\, d\tau,
\]
where $u_L=u_L(t)$ is the solution to the linear problem \eqref{dw} with initial data $(u_L(0), \partial_t u_L(0)) = (u_0,u_1)$. 
Then, 
by Lemmas \ref{lem:keyest1} and \ref{lem:keyest2}, we have
\[
\|u\|_{X(T)} \le 
C I_0 + C' \|F(u)\|_{Y(T)}
\le 
C I_0 + C'' \|u\|_{X(T)}^p.
\]
Then there exists $\epsilon_0>0$ such that  
\begin{equation}\label{est}
\|u\|_{X(T)} \le C\quad \text{if $I_0 \le \epsilon_0$},
\end{equation}
where the constant $C$ is independent of $T$. This contradicts \eqref{contra}. Therefore, $T_{\max}=\infty$. Moreover, by taking the limit of \eqref{est} as $T\to\infty$, we obtain \eqref{global-est}.
Thus, we conclude Theorem \ref{thm:sdge}. 
\end{proof}

%%%%%%%%%%%%%%%%%%%%%%%%%%%%%%%%%%%%%
%%%%%%%% %%%%%%    Section 5    %%%%%%%%%%%%%%%%%%
%%%%%%%%%%%%%%%%%%%%%%%%%%%%%%%%%%%%%

\section{Examples of $A$}\label{sec:6}

\subsection{Dirichlet Laplacian on an open set}\label{sub:6.1}
Let $\Omega$ be an open set of $\mathbb R^d$ with $d\ge 1$, and let $A = -\Delta_D$ denote the Dirichlet Laplacian on $L^2(\Omega)$. 
Then $A$ has a non-negative and self-adjoint realization with the domain
\[
\mathrm{Dom}\, (A) 
=
\left\{
f \in H^1_0(\Omega) : \Delta f \in L^2(\Omega)\text{ in the distributional sense}
\right\},
\]
where $H^1_0(\Omega)$ is the completion of $C_0^\infty(\Omega)$, which is the set of all $C^\infty$-functions on $\Omega$ having compact support in $\Omega$, with respect to the Sobolev norm $\|\cdot\|_{H^1(\Omega)}$. Moreover, the integral kernel $p_D(t,x,y)$ of the semigroup $e^{-tA}$ generated by $A$ has the Gaussian upper bound
\[
0\le p_D(t,x,y) \le (4\pi t)^{-\frac{d}{2}} \exp\left(-\frac{|x-y|^2}{4t}\right)
\]
for any $t>0$ and almost everywhere $x,y\in\Omega$ 
(see, e.g., Propositions 2.1 and 3.1 in \cite{IMT-RMI}). 
This bound yields 
\[
\|e^{t\Delta_D} f\|_{L^\infty(\Omega)} \le \left\|e^{t\Delta} |\tilde{f}| \right\|_{L^\infty(\mathbb R^d)} 
\le C t^{-\frac{d}{4}}\|\tilde f\|_{L^2(\mathbb R^d)}
 = C t^{-\frac{d}{4}}\|f\|_{L^2(\Omega)}
\]
for any $f \in L^2(\Omega)$, where $\tilde{f}$ is the zero extension of $f$ to $\mathbb R^d$.
Thus, we see that the Dirichlet Laplacian $A = -\Delta_D$ satisfies Assumption \ref{assum:A} with $\alpha=d/4$. 
Similarly, $A = -\Delta_D$ also satisfies Assumption \ref{assum:B} with $\alpha=d/4$.
Then the exponent $p_F$ is 
\[
p_F = p_F\left(\frac{d}{4},q\right) = 1 + \frac{2q}{d},
\]  
and we have the following result as a corollary of Theorem \ref{thm:sdge}.

\begin{cor}\label{cor:sdge1}
Let $d \ge 1$, and let $q$ and $p$ satisfy 
\begin{align*}
	p > 1+\frac{2}{d}, \quad 
	\frac{2}{p} < \frac{d}{2}(p-1), \quad 
	\text{and} \quad 
	q \in
	\left[ 
	\max \left\{ 1, \frac{2}{p} \right\},
	\min \left\{ 2, \frac{d}{2}(p-1) \right\}
	\right].
\end{align*}
In addition, assume that 
\[
p \le \frac{d}{d-2}\quad \text{if }d\ge3.
\]
Then, the same statement as Theorem \ref{thm:sdge} holds with $A=-\Delta_D$. 
\end{cor}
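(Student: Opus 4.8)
The plan is to derive Corollary~\ref{cor:sdge1} as a direct specialization of Theorem~\ref{thm:sdge}, so the work is entirely in checking that the hypotheses of the theorem are met for $A = -\Delta_D$. First I would recall that the Dirichlet Laplacian on an arbitrary open set $\Omega \subset \mathbb{R}^d$ is non-negative and self-adjoint on $L^2(\Omega)$, and that its heat kernel $p_D(t,x,y)$ satisfies the Gaussian upper bound stated just above the corollary. From the pointwise bound $0 \le p_D(t,x,y) \le (4\pi t)^{-d/2} e^{-|x-y|^2/(4t)}$ one reads off $\|e^{-tA}\|_{L^1 \to L^\infty} \lesssim t^{-d/2}$; interpolating with the contraction property $\|e^{-tA}\|_{L^2 \to L^2} \le 1$ (via Riesz--Thorin, exactly as in the proof of Lemma~\ref{lem:LpLq}) gives $\|e^{-tA}\|_{L^2 \to L^\infty} \lesssim t^{-d/4}$, which is Assumption~\ref{assum:A} with $\alpha = d/4$. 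The same Gaussian bound yields $\|e^{-tA}\|_{L^q \to L^q} \lesssim 1$ and $\|e^{-tA}\|_{L^q \to L^\infty} \lesssim t^{-d/(2q)} = t^{-2\alpha/q}$ for every $q \in [1,\infty]$ by Young's convolution inequality applied to the kernel domination, so Assumption~\ref{assum:B} holds as well (indeed for all relevant $q$, not just the window $4\alpha/(4\alpha-1) < q < 4\alpha$).

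Next I would translate the conditions \eqref{supercritical} and \eqref{additional-assum} of Theorem~\ref{thm:sdge} under the substitution $\alpha = d/4$. The relation $p_F(\alpha,1) = 1 + 1/(2\alpha) = 1 + 2/d$ shows that $p > p_F(\alpha,1)$ becomes $p > 1 + 2/d$. The middle inequality $2/p < 2\alpha(p-1)$ becomes $2/p < \tfrac{d}{2}(p-1)$, and the admissible range for $q$, namely $q \in [\max\{1, 2/p\}, \min\{2, 2\alpha(p-1)\}]$, becomes $q \in [\max\{1, 2/p\}, \min\{2, \tfrac{d}{2}(p-1)\}]$ — precisely the hypotheses displayed in the corollary. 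Finally the extra condition $p \le 2\alpha/(2\alpha - 1)$ required only when $\alpha > 1/2$ becomes: when $d/4 > 1/2$, i.e.\ $d \ge 3$, one needs $p \le \tfrac{d/2}{d/2 - 1} = \tfrac{d}{d-2}$; for $d = 1, 2$ the condition $\alpha > 1/2$ fails and no restriction beyond the first set is needed, matching the ``if $d \ge 3$'' clause in the corollary.

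With both assumptions verified and the hypotheses identified, Theorem~\ref{thm:sdge} applies verbatim: for initial data $(u_0,u_1) \in (H^1(A) \cap L^q) \times (L^2 \cap L^q)$ with $I_0(u_0,u_1) \le \epsilon_0$ there is a unique global solution $u \in C([0,\infty); H^1(A)) \cap C^1([0,\infty); L^2)$, and it obeys the decay estimate \eqref{global-est} with $\alpha = d/4$, i.e.\ the rates $\langle t\rangle^{-\frac{d}{2}(\frac{1}{q} - \frac12) - 1}$, $\langle t\rangle^{-\frac{d}{2}(\frac{1}{q} - \frac12) - \frac12}$, $\langle t\rangle^{-\frac{d}{2}(\frac{1}{q} - \frac12)}$ for $\|u_t\|_{L^2}$, $\|A^{1/2}u\|_{L^2}$, $\|u\|_{L^2}$ respectively (up to the logarithmic factor in the critical case $q = \tfrac{d}{2}(p-1)$). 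This is exactly the asserted statement. I do not anticipate a genuine obstacle here; the only point requiring a little care is bookkeeping the case distinction $d = 1,2$ versus $d \ge 3$ against the threshold $\alpha = 1/2 \Leftrightarrow d = 2$, and confirming that for $d = 2$ the borderline $\alpha = 1/2$ indeed does not trigger Assumption~\ref{assum:B} or condition \eqref{additional-assum} — which is consistent with the corollary imposing the restriction only for $d \ge 3$.
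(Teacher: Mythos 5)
Your proposal is correct and follows the same route as the paper: verify that the Gaussian upper bound for the Dirichlet heat kernel gives Assumptions~\ref{assum:A} (with $\alpha=d/4$) and \ref{assum:B}, translate the conditions \eqref{supercritical} and \eqref{additional-assum} under $\alpha=d/4$ (noting $\alpha>1/2\Leftrightarrow d\ge3$ and $2\alpha/(2\alpha-1)=d/(d-2)$), and invoke Theorem~\ref{thm:sdge}. One minor repair: Riesz--Thorin interpolation of $L^1\to L^\infty$ against $L^2\to L^2$ cannot yield an $L^2\to L^\infty$ bound (the target exponent is forced back to $2$); instead get $\|e^{-tA}\|_{L^2\to L^\infty}\lesssim t^{-d/4}$ directly by Cauchy--Schwarz applied to the Gaussian kernel, or equivalently by duality from $\|e^{-tA}\|_{L^1\to L^2}\lesssim t^{-d/4}$.
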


We note that, when $d = 1, 2$,
the above conditions are simply written 
\begin{align*}
	p > 1+ \frac{2}{d} \quad \text{and} \quad
	q \in \left[ 1, \min\left\{ 2, \frac{d}{2}(p-1) \right\} \right].
\end{align*}

When
$\Omega = \mathbb{R}^d$
and $A = - \Delta$,
Corollary \ref{cor:sdge1}
slightly improves Theorems 1 and 2 of \cite{IkeOht-2002},
in which the assumption
\begin{align*}
	\begin{cases}
	p > 1+ \frac{2}{d} \quad \text{and} \quad
	q \in [1,2] \cap \left[ 1, \frac{d}{2}(p-1) \right).
	&(d=1,2),\\
	p > 1+\frac{2}{d}, \  
	\frac{2}{p} < \frac{d}{2}(p-1), \  
	\text{and} \  
	q \in
	\left[ 
	\frac{\sqrt{d^2+16d}-d}{4},
	\min \left\{ 2, \frac{d}{2}(p-1) \right\}
	\right)
	&(3 \le d \le 6)
	\end{cases}
\end{align*}
is imposed.
Compared to this, Corollary \ref{cor:sdge1}
relaxes the range of $q$ and, in particular,
includes the critical case
$q = \frac{d}{2}(p-1)$, i.e., $p = p_F(\frac{d}{4},q)$
except for the case $q=1$.
For the recent progress of the study of semilinear damped wave equation in
$\mathbb{R}^d$, 
we refer to the introduction and the references of
\cite{IIOW-2019}.

As for the Dirichlet problems of damped wave equations on exterior domains,
Ono \cite{Ono-2003}
proved the small data global existence under the assumption
\begin{align*}
	1 \le q \le \frac{2d}{d+2}
	\quad \text{and} \quad
	 \begin{cases}
	 1 + \frac{4}{d+2} < p < \infty &(d=2),\\
	 1 + \frac{4}{d+2} < p \le \frac{d}{d-2} &(3 \le d \le 5),
	 \end{cases}
\end{align*}
or $d=3, q=1$, $1+\frac{2}{d} < p \le 3$.
This result is included in Corollary \ref{cor:sdge1} in the case $d=2$,
though there is no mutual implication in the case $3 \le d \le 5$.
For the initial data in weighted Sobolev spaces,
we refer the reader to 
Sobajima \cite{Sob-2019}.

\subsection{Robin Laplacian on an exterior domain}\label{sub:6.2}
Let $d\ge3$ and $\Omega$ be an exterior domain in $\mathbb R^d$ of a compact and connected set with Lipschitz boundary.
We consider the Laplace operator $-\Delta_\gamma$ on $L^2(\Omega)$ associated with a quadratic form
\[
q_\gamma(f,g) = \int_\Omega \nabla f \cdot \overline{\nabla g}\,dx
+ \int_{\partial \Omega} \gamma f \overline{g}\,dS
\]
for any $f,g \in H^1(\Omega)$, where $\gamma$ is a function $\partial \Omega \to \mathbb R$ and $\partial \Omega$ denotes the boundary of $\Omega$.
Note that the case of $\gamma=0$ is the Neumann Laplacian $-\Delta_N$ on $L^2(\Omega)$.
Assume that $\gamma \in L^\infty(\partial \Omega)$ and $\gamma\ge0$.
We denote by $p_\gamma(t,x,y)$ and $p_N(t,x,y)$ the integral kernels of the semigroup generated by $-\Delta_\gamma$ and $-\Delta_N$, respectively. 
Then $p_\gamma(t,x,y)$ and $p_N(t,x,y)$ have the Gaussian upper bounds. In fact, it is known that 
\[
p_D(t,x,y) \le p_\gamma(t,x,y)\le p_N(t,x,y)
\lesssim t^{-\frac{d}{2}} \exp\left(-\frac{|x-y|^2}{Ct}\right)
\]
for any $t>0$ and almost everywhere $x,y\in\Omega$ and for some $C>0$. 
The first and second inequalities follow from domination of semigroups (see, e.g., Theorem 2.24 in \cite{Ouh_2005}),  
and the proof of the last inequality can be found in 
Chen, Williams and Zhao \cite{CWZ-1994}. 

In the low dimensional cases $d=1,2$, 
the Gaussian upper bounds are also obtained by Kova\v{r}\'{i}k and Mugnolo \cite{KM-2018}. 
More precisely, in the case $d=2$, if 
$\Omega$ is an exterior domain in $\mathbb R^2$ of a compact and connected set with $C^2$-boundary and
\[
\underset{x\in\partial \Omega}{\mathrm{ess\ inf}}\,
\gamma(x) >0, 
\]
then the Gaussian upper bound
\begin{equation}\label{GUB-R}
|p_\gamma(t,x,y)|\lesssim t^{-\frac{d}{2}} \exp\left(-\frac{|x-y|^2}{Ct}\right)
\end{equation}
holds for any $t>0$ and almost everywhere $x,y\in\Omega$ and for some $C>0$ (see Section~2 in \cite{KM-2018}).
In the case $d=1$, let $\Omega=\mathbb R_+$ and $-\Delta_\gamma$
is the Laplace operator on $L^2(\mathbb R_+)$ associated with a quadratic form
\[
q_\gamma(f,g) = \int_0^\infty f' \overline{g'}\,dx
+ \gamma f(0)\overline{g(0)}
\]
for any $f,g \in H^1(\mathbb R_+)$, where $\gamma\ge 0$ is a constant.
Then we also have the Gaussian upper bound \eqref{GUB-R} (see Section 4 in \cite{KM-2018}).
Thus, the Robin Laplacian $A = -\Delta_\gamma$ satisfies Assumptions \ref{assum:A} and \ref{assum:B} with $\alpha=d/4$.  
In these cases, we have the same result as Corollary \ref{cor:sdge1} with $A = -\Delta_\gamma$.

\subsection{Schr\"odinger operator with a Kato type potential}\label{sub:6.3}
Let $\Omega$ be an open set of $\mathbb R^d$ with $d\ge 1$. We consider 
the Schr\"odinger operator $A= -\Delta + V$ with the homogeneous Dirichlet boundary condition on $\Omega$, where 
$V=V(x)$ is a real-valued measurable function on $\Omega$ such that 
\[
V=V_+ - V_-,\quad V_{\pm} \ge0,\quad V_+ \in L^1_{\mathrm{loc}}(\Omega)\quad \text{and}\quad
V_- \in K_d(\Omega).
\]
We say that $V_-$ belongs to the Kato class $K_{d}(\Omega)$ if
\begin{align}\notag
\left\{
\begin{aligned}
	&\lim_{r \rightarrow 0} \sup_{x \in \Omega} \int_{\Omega \cap \{|x-y|<r\}}
	   \frac{V_-(y)}{|x-y|^{d-2}} \,dy = 0 &\text{for }d\ge 3, \\
	&\lim_{r \rightarrow 0} \sup_{x \in \Omega} \int_{\Omega \cap \{|x-y|<r\}}
	   \log (|x-y|^{-1})V_-(y) \,dy = 0 &\text{for }d=2, \\
	&\sup_{x \in \Omega}\int_{\Omega \cap \{|x-y|<1\}} V_-(y) \,dy <\infty &\text{for }d=1\,
	\end{aligned}\right.
\end{align}
(see Section A.2 in Simon \cite{Simon-1982}).
It is readily seen that the potential $V_-(x) = 1/|x|^\alpha$ with $0\le \alpha <2$ if $d\ge2$ and $0\le \alpha <1$ if $d=1$
is included in $K_{d}(\Omega)$. 
It should be noted that the potential like $V_-(x) = 1 / |x|^2$ near $x=0$ %as $|x|\to 0$
is excluded from $K_d(\Omega)$.
In addition, we assume that the negative part $V_-$ satisfies
\[
\begin{cases}
\displaystyle\sup_{x \in \Omega} \int_{\Omega} \frac{V_-(y)}{|x-y|^{d-2}} \,dy
 < \dfrac{\pi^{\frac{d}{2}}}{\Gamma (\frac{d}{2}-1)}
& \quad \text{if } d \geq 3,\\
V _- = 0
& \quad \text{if } d = 1, 2.
\end{cases}
\]
Then $A$ has a non-negative and self-adjoint realization on $L^2(\Omega)$, and the integral kernel of its semigroup satisfies the same bound as \eqref{GUB-R} 
(see, e.g., Propositions 2.1 and 3.1 in \cite{IMT-RMI}). 
Thus, the Schr\"odinger operator $A$ satisfies Assumptions \ref{assum:A} and \ref{assum:B} with $\alpha=d/4$, and the same result as Corollary \ref{cor:sdge1} holds with $A= -\Delta + V$.

\subsection{Schr\"odinger operator with a Dirac delta potential}\label{sub:6.4}
Let $X=\mathbb R$. We consider the Schr\"odinger operator $A = -(1/2)\partial_x^2 + q\delta (x)$ with repulsive delta potential $q\delta$, $q>0$. 
Then $A$ has a non-negative and self-adjoint realization on $L^2(\mathbb R)$ (see Theorem 3.1.1 in Chapter 1.3 in \cite{AGHH_1988} for the details) and the estimates
\begin{equation}\label{decay-est-d}
\|e^{-tA}\|_{L^{q_1}\to L^{q_2}} \lesssim t^{-\frac{1}{2}(\frac{1}{q_1}-\frac{1}{q_2})}
\end{equation}
for any $t>0$ and $1\le q_1 \le q_2 \le \infty$. 
Thus, the Schr\"odinger operator $A$ satisfies Assumption \ref{assum:A} with $\alpha = 1/4$ (Assumption \ref{assum:B} is unnecessary as $\alpha=1/4$), and the same result as Corollary \ref{cor:sdge1} holds with $d=1$. 

In the rest of this subsection, let us give a sketch of proof of \eqref{decay-est-d}. 
The proof is based on the argument of proof of $L^1$-$L^\infty$ estimate (2.23) for Schr\"odinger group $e^{itA}$ in Segata \cite{Seg-2015}.
Let $\mathbf{1}_{+}$ and $\mathbf{1}_{-}$ be characteristic functions on $[0,\infty)$ and $(-\infty, 0)$, respectively. 
We have the representation formula for $e^{-tA}$:
\[
(e^{-tA} f) (x) 
=
\begin{cases}
\displaystyle C t^{-\frac12} \int_{-\infty}^\infty e^{-\frac{|x-y|^2}{2t}} \mathscr L_+[f](y)\, dy\quad &\text{if }x\ge0,\\
\displaystyle C t^{-\frac12} \int_{-\infty}^\infty e^{-\frac{|x-y|^2}{2t}} \mathscr L_-[f](y)\, dy\quad &\text{if }x<0,
\end{cases}
\]
where 
\[
\mathscr L_{\pm}[f](x) 
:=
f(x) - q \mathbf{1}_{\mp}(x)e^{\pm qx}\int_{\pm x}^{\mp x} e^{q|y|}f(y)\, dy,\quad x \in \mathbb R
\]
(see Lemma 2.1 in Holmer, Marzuola and Zworski \cite{HMZ-2007} and also Proposition 2.1 in \cite{Seg-2015}). 
Then, 
\[
\|e^{-tA} f \|_{L^{q_2}} \lesssim t^{-\frac{1}{2}(\frac{1}{q_1}-\frac{1}{q_2})} (\|\mathscr L_+[f]\|_{L^{q_1}} + \|\mathscr L_-[f]\|_{L^{q_1}}).
\]
Now, it is readily seen that $\mathscr L_{\pm}$ are linear and bounded on $L^1(\mathbb R)$ and $L^\infty(\mathbb R)$. 
Hence, by the Riesz-Thorin interpolation theorem, $\mathscr L_{\pm}$ are bounded on $L^q(\mathbb R)$ for any $1\le q \le \infty$.
Therefore, the estimates \eqref{decay-est-d} are obtained. 

\subsection{Elliptic operators}\label{sub:6.5}
Let $\Omega$ be an open set of $\mathbb R^d$ with $d\ge 1$.
We consider the self-adjoint operator $A$ associated with a quadratic form
\[
q(f,g) =
\int_{\Omega}
\left\{
\sum_{k,j=1}^d a_{kj}(x) \partial_{x_k} f(x) \overline{\partial_{x_j} g(x)} + 
a_0(x) f(x) \overline{g(x)}
\right\}\,dx
\]
for any $f, g \in H^1_0(\Omega)$, where
$a_{kj}, a_0 \in L^\infty(\Omega)$ are real-valued functions for all $1\le j,k\le d$, $a_0\ge0$, and the principle part is elliptic, i.e.,
there exists a constant $\eta>0$ such that
\[
\sum_{j,k=1}^d a_{kj}(x) \xi_j \overline{\xi_k} \ge \eta |\xi|^2,\quad \xi \in\mathbb C^{d}, \text{ a.e.}\, x \in \Omega.
\]
The integral kernel of $e^{-tA}$ has the same bound as \eqref{GUB-R}
(see, e.g., Theorem~6.8 in \cite{Ouh_2005}). 
Thus, the elliptic operator $A$ satisfies Assumptions \ref{assum:A} and \ref{assum:B} with $\alpha=d/4$, and the same result as Corollary \ref{cor:sdge1} holds with the elliptic operator $A$. 

\subsection{Laplace operator on Sierpinski gasket in $\mathbb R^d$}\label{sub:6.6}
Let $X$ be the unbounded Sierpinski gasket in $\mathbb R^d$ with $d\ge 2$. 
Let $\rho$ be the induced metric on $X$, and let $\mu$  be the Hausdorff measure on $X$ of dimension $D_{SG} = \log_2 (d + 1)$. 
The measure $\mu$ is $\sigma$-finite and it satisfies 
\[
\mu ( B(x,r)) \lesssim r^{D_{SG}}
\]
for any $x\in X$ and $r>0$, where $B(x,r) := \{y \in X : \rho(x,y)\le r \}$, 
and the standard local Dirichlet form on $X$ generates the non-negative self-adjoint operator $A$ on $L^2(X)$. Then the integral kernel 
$p_{SG}(t,x,y)$ of its semigroup satisfies 
\[
|p_{SG}(t,x,y)|
\lesssim 
t^{-\frac{D_{SG}}{m}} \exp \left(
- \frac{\rho(x,y)^{\frac{m}{m-1}}}{C t^{\frac{1}{m-1}}}
\right)
\]
for any $t>0$ and almost everywhere $x,y \in X$ and for some $C>0$, where $m=\log_2 (d+3)$ 
(see Section 2 and Theorem 8.18 in Barlow \cite{Bar_1995} and also Subsection 3.3 in Bui, D'Ancona and Nicola \cite{BDN-2020}). 
Thus, the operator $A$ satisfies Assumption \ref{assum:A} with $\alpha = D_{SG}/(2m) = (\log_2 (d + 1))/(2\log_2 (d + 3))$. 
Here, we note that $\alpha$ is always less than $1/2$ for any $d\ge2$, and hence, Assumption~\ref{assum:B} does not appear. 
The exponent $p_F$ is 
\[
p_F = p_F\left(\frac{D_{SG}}{2m},q\right) = 1 + \frac{mq}{D_{SG}} = 1 + \frac{\log_2 (d+3)}{\log_2 (d + 1)} q.
\]
Noting
$p_F\left(\frac{D_{SG}}{2m},1\right) >2$,
we have the following result as a corollary of Theorem \ref{thm:sdge}.

\begin{cor}\label{cor:sdge2}
Let $d \ge 2$, and let $q$ and $p$ satisfy 
\begin{align*}
	p > p_F\left(\frac{D_{SG}}{2m},1\right) \ 
	\text{and} \ 
	q \in
	\left[
	1,
	\min\left\{ 2, \frac{D_{SG}}{m}(p-1) \right\}
	\right].
\end{align*}
Then, the same statement as Theorem \ref{thm:sdge} holds with the Laplace operator $A$ on Sierpinski gasket $X$. 
\end{cor}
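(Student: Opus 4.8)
The plan is to derive Corollary \ref{cor:sdge2} directly from Theorem \ref{thm:sdge} by verifying that the Laplace operator $A$ on the unbounded Sierpinski gasket $X$ satisfies the hypotheses required there, and that the conditions \eqref{supercritical} and \eqref{additional-assum} reduce to the simpler conditions stated in the corollary. First I would observe that the Gaussian-type upper bound for the heat kernel $p_{SG}(t,x,y)$, combined with the volume growth $\mu(B(x,r)) \lesssim r^{D_{SG}}$, yields the on-diagonal bound $\|e^{-tA}\|_{L^1 \to L^\infty} \lesssim t^{-D_{SG}/m}$ by integrating the kernel bound in $y$; interpolating with the trivial $L^2 \to L^2$ bound gives $\|e^{-tA}\|_{L^2 \to L^\infty} \lesssim t^{-D_{SG}/(2m)}$, which is exactly Assumption \ref{assum:A} with $\alpha = D_{SG}/(2m) = (\log_2(d+1))/(2\log_2(d+3))$.

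Next I would check that $\alpha < 1/2$ for every $d \ge 2$: since $\log_2(d+1) < \log_2(d+3)$ always, we have $\alpha = \frac{\log_2(d+1)}{2\log_2(d+3)} < \frac12$. Consequently Assumption \ref{assum:B} is vacuous (it is only imposed when $\alpha > 1/2$), and the extra restriction \eqref{additional-assum}, namely $p \le \frac{2\alpha}{2\alpha-1}$ if $\alpha > 1/2$, is automatically satisfied with no constraint on $p$. So the only hypotheses of Theorem \ref{thm:sdge} left to unpack are those in \eqref{supercritical}. Here I would use the general arithmetic already recorded in the paper: $p > p_F(\alpha,1) = 1 + \frac{1}{2\alpha}$ is the first condition, and the middle condition $\frac{2}{p} < 2\alpha(p-1)$ together with $p > p_F(\alpha,1)$ forces $\max\{1, 2/p\} = 1$ whenever $p_F(\alpha,1) \ge 2$ — and indeed $p_F(D_{SG}/(2m),1) = 1 + m/D_{SG} = 1 + \frac{\log_2(d+3)}{\log_2(d+1)} > 2$ since $\log_2(d+3) > \log_2(d+1)$. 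Thus for $p > p_F(D_{SG}/(2m),1)$ we automatically have $p > 2$, hence $2/p < 1 < p - 1 \le 2\alpha(p-1) \cdot (\text{something})$; more carefully, $2/p < 1$ and $2\alpha(p-1) > 2\alpha(p_F(\alpha,1)-1) = 2\alpha \cdot \frac{1}{2\alpha} = 1 > 2/p$, so the middle inequality in \eqref{supercritical} holds as well. Therefore \eqref{supercritical} collapses to $p > p_F(D_{SG}/(2m),1)$ and $q \in [\,1, \min\{2, 2\alpha(p-1)\}\,] = [\,1, \min\{2, \frac{D_{SG}}{m}(p-1)\}\,]$, which is precisely the hypothesis of the corollary.

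Having matched all hypotheses, I would then simply invoke Theorem \ref{thm:sdge} to conclude the global existence of a unique solution $u \in C([0,\infty);H^1(A)) \cap C^1([0,\infty);L^2(X))$ with the decay estimate \eqref{global-est} for small $I_0$, where the exponent $\delta$ is determined as in Theorem \ref{thm:sdge}. I do not anticipate a serious obstacle: the only genuinely substantive point is the verification that the heat-kernel bound stated for $p_{SG}$ implies Assumption \ref{assum:A} with the claimed $\alpha$ — a one-line integration argument using the volume estimate — and the bookkeeping that $\alpha < 1/2$ and $p_F(\alpha,1) > 2$ so that Assumption \ref{assum:B}, \eqref{additional-assum}, and the $\max\{1,2/p\}$ term all trivialize. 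The mild subtlety worth stating explicitly is that, because $p_F(D_{SG}/(2m),1) > 2$, the case $q = 1$ is necessarily \emph{not} the critical case $q = 2\alpha(p-1)$ once $p$ exceeds the Fujita-type threshold, so there is no loss in restricting to $q \ge 1$ in the corollary's range, and the critical case $q = \frac{D_{SG}}{m}(p-1)$ is included exactly when that value lies in $[1,2]$.
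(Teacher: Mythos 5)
Your proposal is correct and follows essentially the same route as the paper: verify Assumption~\ref{assum:A} with $\alpha = D_{SG}/(2m)$ from the sub-Gaussian heat kernel bound, note that $\alpha<1/2$ renders Assumption~\ref{assum:B} and \eqref{additional-assum} vacuous while $p_F\left(\frac{D_{SG}}{2m},1\right)>2$ trivializes the conditions $\max\{1,2/p\}=1$ and $\frac{2}{p}<2\alpha(p-1)$ in \eqref{supercritical}, and then invoke Theorem~\ref{thm:sdge}. One cosmetic remark: the $L^2\to L^\infty$ decay does not follow from Riesz--Thorin interpolation of $L^1\to L^\infty$ with $L^2\to L^2$ (that interpolation only returns $L^2\to L^2$); one should instead use Cauchy--Schwarz together with the symmetric semigroup identity $\int_X p_{SG}(t,x,y)^2\,d\mu(y)=p_{SG}(2t,x,x)\lesssim t^{-D_{SG}/m}$.
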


\begin{rem}
In this case, the additional assumption \eqref{additional-assum} does not appear in the corollary, as $\alpha < 1/2$. 
\end{rem}

\subsection{Fractional Laplacian}\label{sub:6.7}

Let $\Omega$ be an open set of $\mathbb R^d$, and let $A$ be a non-negative and self-adjoint operator on $L^2(\Omega)$. 
Assume that there exists an integral kernel $p(t,x,y)$ of $e^{-tA}$ and it satisfies 
\[
|p(t,x,y)|
\lesssim 
t^{-\frac{d}{m}} \exp \left(
- \frac{|x-y|^{\frac{m}{m-1}}}{C t^{\frac{1}{m-1}}}
\right)
\]
for any $t>0$ and almost everywhere $x,y \in \Omega$ and for some $C>0$, where $m>1$. 
Then, $A$ has no zero eigenvalues, and 
the operators $\phi_j(\sqrt{A})$ are uniformly bounded on $L^p(\Omega)$ with respect to $j \in \mathbb Z$ (see the proof of Theorem 1.4 in \cite{BDN-2020}),
where $\{\phi_j\}_j$ is the Littlewood-Paley decomposition.
Therefore, for $s \in \mathbb R$ and $1\le r \le \infty$, we can define the Besov spaces $\dot B^s_{q,r}(A)$ and Triebel-Lizorkin spaces $\dot F^s_{q,r}(A)$ associated with $A$ by 
\[
\|f\|_{\dot B^s_{q,r}(A)} := 
\left\{\sum_{j\in \mathbb Z} 2^{srj}\left\|\phi_j(\sqrt{A})f\right\|_{L^q(\Omega)}^r\right\}^{\frac{1}{r}}\quad \text{for }1\le q \le \infty,
\]
\[
\|f\|_{\dot F^s_{q,r}(A)} := \left\|
\left\{\sum_{j\in \mathbb Z} \left|2^{sj}\phi_j(\sqrt{A})f\right|^r\right\}^{\frac{1}{r}}
\right\|_{L^q(\Omega)} \quad \text{for }1\le q < \infty
\]
(with the usual modifications for $r=\infty$), 
and the fundamental properties of $\dot B^s_{q,r}(A)$ and $\dot F^s_{q,r}(A)$ can be also shown, such as completeness, embedding relations and duality.
In particular, we have the relations 
\begin{equation}\label{relations-besov}
\dot F^0_{q, 2}(A) = L^q(\Omega),
\quad L^q(\Omega) \hookrightarrow 
\dot B^0_{q, \infty}(A)\quad 
\text{and}\quad 
\dot B^0_{\infty, 1}(A) \hookrightarrow L^\infty(\Omega)
\end{equation} 
for any $1 < q < \infty$.
For the details of $\dot B^s_{q,r}(A)$ and $\dot F^s_{q,r}(A)$, we refer to \cite{BBD-2020,IMT-Besov}\footnote{In \cite{BBD-2020}, the homogeneous Triebel-Lizorkin spaces are studied on metric measure spaces with the doubling condition via the spectral approach. On the other hand, in \cite{IMT-Besov}, the theory of homogeneous Besov spaces associated with $-\Delta_D$ on an open set is established. In combination of these arguments, the theory of $\dot B^s_{q,r}(A)$ and $\dot F^s_{q,r}(A)$ can be established.}. 
Based on the argument of the proof of Theorem 1.1 in Iwabuchi \cite{Iwa-2018}, we can obtain 
the following estimates for $e^{-tA^\frac{\nu}{2}} $ with $\nu>0$:
\[
\| e^{-tA^\frac{\nu}{2}} \|_{\dot F^0_{q,2}(A) \to \dot F^0_{q,2}(A)} \lesssim
1
\quad \text{and}\quad 
\| e^{-tA^\frac{\nu}{2}} \|_{\dot B^0_{q,\infty}(A) \to \dot B^0_{\infty, 1}(A)} \lesssim
t^{-\frac{2d}{m \nu q}}
\]
for any $t>0$ and $1 \le q < \infty$. By combining these estimates with the relations \eqref{relations-besov}, we find that
\[
\| e^{-tA^\frac{\nu}{2}} \|_{L^q \to L^q} \lesssim
1
\quad \text{and}\quad 
\| e^{-tA^\frac{\nu}{2}} \|_{L^q \to L^\infty} \lesssim
t^{-\frac{2d}{m\nu q}}
\]
for any $t>0$ and $1 < q < \infty$. 
Thus, the operator $A^{\nu/2}$ satisfies Assumptions \ref{assum:A} and \ref{assum:B} with $\alpha=d/(m \nu)$.
Hence, 
\[
p_F = p_F\left(\frac{d}{m \nu},q\right) = 1 + \frac{m\nu q}{2d},
\] 
and we have the following result as a corollary of Theorem \ref{thm:sdge}.

\begin{cor}\label{cor:sdge3}
Let $d \ge 1$, and let $q$ and $p$ satisfy 
\begin{align*}
	p > p_F\left(\frac{d}{m \nu},1\right), \ 
	\frac{2}{p} < \frac{2d}{m \nu} (p-1), \ 
	\text{and} \ 
	q \in 
	\left[
	\max\left\{ 1, \frac{2}{p} \right\},
	\min\left\{ 2, \frac{2d}{m \nu} (p-1) \right\}
	\right].
\end{align*}
In addition, assume that 
\[
p \le \frac{2d}{2d-m\nu}\quad \text{if }d > \frac{m\nu}{2}.
\]
Then, the same statement as Theorem \ref{thm:sdge} holds with $A^\frac{\nu}{2}$. 
\end{cor}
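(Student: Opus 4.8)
The plan is to deduce Corollary \ref{cor:sdge3} directly from Theorem \ref{thm:sdge} by checking that the operator $B:=A^{\nu/2}$ satisfies Assumptions \ref{assum:A} and \ref{assum:B} with the specific value $\alpha = d/(m\nu)$, and then translating the abstract hypotheses \eqref{supercritical} and \eqref{additional-assum} through this identification.

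First I would establish the heat-semigroup bounds for $B$. The Gaussian-type upper bound assumed on the kernel $p(t,x,y)$ of $e^{-tA}$ guarantees that $A$ has no zero eigenvalue and that the Littlewood--Paley projections $\phi_j(\sqrt A)$ are uniformly bounded on $L^q(\Omega)$ for $1<q<\infty$; this is what makes the Triebel--Lizorkin scale $\dot F^s_{p,q}(A)$ well defined, together with the identifications in \eqref{relations-besov}. Following the argument of Iwabuchi \cite{Iwa-2018}, namely writing $e^{-tB}$ as a subordinated superposition of the operators $e^{-sA}$, $s>0$, and exploiting the $L^q$-boundedness of the Littlewood--Paley pieces, one obtains
\[
\| e^{-tB} \|_{\dot F^0_{q,2}(A) \to \dot F^0_{q,2}(A)} \lesssim 1,
\qquad
\| e^{-tB} \|_{\dot F^0_{q,\infty}(A) \to \dot F^0_{\infty,1}(A)} \lesssim t^{-\frac{2d}{m\nu q}}
\]
for $t>0$ and $1\le q<\infty$. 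Combining these with \eqref{relations-besov} yields
\[
\| e^{-tB} \|_{L^q\to L^q} \lesssim 1,
\qquad
\| e^{-tB}\|_{L^q\to L^\infty} \lesssim t^{-\frac{2d}{m\nu q}}
\]
for $1<q<\infty$ and $t>0$. Specializing to $q=2$ gives $\|e^{-tB}\|_{L^2\to L^\infty}\lesssim t^{-d/(m\nu)}$, i.e.\ Assumption \ref{assum:A} with $\alpha = d/(m\nu)$; and the displayed pair of bounds is exactly Assumption \ref{assum:B} on the required range of $q$.

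With both assumptions verified, I would invoke Theorem \ref{thm:sdge} with $\alpha = d/(m\nu)$. Then $p_F(\alpha,q) = 1 + q/(2\alpha) = 1 + m\nu q/(2d)$, so the condition $p>p_F(\alpha,1)$ reads $p>p_F(d/(m\nu),1)$, the condition $2/p<2\alpha(p-1)$ reads $2/p < (2d/(m\nu))(p-1)$, the admissible range for $q$ is $[\max\{1,2/p\},\min\{2,(2d/(m\nu))(p-1)\}]$, and the extra restriction \eqref{additional-assum}, which is active precisely when $\alpha>1/2$, i.e.\ when $d>m\nu/2$, becomes $p\le 2d/(2d-m\nu)$. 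These are exactly the hypotheses stated in the corollary, and the conclusion of Theorem \ref{thm:sdge} (small-data global existence together with the decay estimate \eqref{global-est}) transfers verbatim.

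The only genuinely nontrivial part of this program is the construction of the $\dot F^s_{p,q}(A)$ scale and the derivation of the two semigroup estimates above; all of the needed machinery is available from \cite{BBD-2020,IMT-Besov,Iwa-2018,BDN-2020}, and once these are quoted the remaining argument is a routine substitution of $\alpha = d/(m\nu)$ into Theorem \ref{thm:sdge}. I do not expect any further obstacle.
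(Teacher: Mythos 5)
Your proposal is correct and follows essentially the same route as the paper: verify Assumptions \ref{assum:A} and \ref{assum:B} for $A^{\nu/2}$ via the Triebel--Lizorkin machinery of \cite{BBD-2020,IMT-Besov,BDN-2020} and the subordination argument of \cite{Iwa-2018} to get $\|e^{-tA^{\nu/2}}\|_{L^q\to L^q}\lesssim 1$ and $\|e^{-tA^{\nu/2}}\|_{L^q\to L^\infty}\lesssim t^{-2d/(m\nu q)}$ for $1<q<\infty$, then substitute $\alpha=d/(m\nu)$ into Theorem \ref{thm:sdge}. The translation of \eqref{supercritical} and \eqref{additional-assum} into the stated hypotheses is carried out exactly as in the paper.
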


In the case $\Omega = \mathbb R^d$ and $A= (-\Delta)^\frac{\nu}{2}$, 
the damped wave equations have been studied (see \cite{FIW-2020,Tak-2011} and references therein). 
In particular, the corresponding results were proved in the weighted $L^2$-based Sobolev spaces, instead of $L^q(\mathbb R^d)$ (see Theorem~2.7 in \cite{FIW-2020}). 

\subsection{Other examples}\label{sub:6.8}
Other examples are the Laplace-Beltrami operators on Riemann manifolds, the Laplace operators on homogeneous groups, the sub-Laplacian operators on Heisenberg groups, etc.
We here omit the details, and we refer to Section~3 in \cite{BDN-2020} for instance.

\appendix

\section{}

In this appendix we give the Sobolev inequality and the Gagliardo-Nirenberg inequality associated with $A$, which are used in this paper.

Let $(X,\mu)$ be a $\sigma$-finite measure space, and let $A$ be a self-adjoint operator on $L^2(X)$. 
For a Borel measurable function $\phi$ on $\mathbb R$, 
an operator $\phi(A)$ is defined by the spectral decomposition 
\[
\phi(A) = \int_{\sigma(A)} \phi(\lambda)\, dE_{A}(\lambda)
\]
with the domain 
\[
\mathrm{Dom} (\phi(A)) = 
\left\{
f \in L^2(X) : \int_{\sigma(A)} |\phi(\lambda)|^2\, d\|E_A(\lambda) f\|_{L^2}^2 < \infty
\right\},
\]
where $\sigma(A)$ denotes the spectrum of $A$ and $\{E_{A}(\lambda)\}_ {\lambda\in\mathbb R}$ is the spectral resolution of the identity for $A$. 
For $s\ge0$, 
the Sobolev space $H^s(A)$ is defined by 
\[
H^s(A) := \{ f\in L^2(X) : \|f\|_{H^s(A)}<\infty\}
\]
with the norm
\[
\|f\|_{H^s(A)} := \|(I+A)^{\frac{s}{2}} f\|_{L^2(X)}.
\]
The space $H^{-s}(A)$ denotes the dual space of $H^s(A)$. For simplicity, we may assume $A$ is non-negative on $L^2(X)$. Then,
by a combination of the spectral decomposition of $A$ with the Laplace transform, we have 
the formulas
\begin{equation}\label{appA:formula1}
(I+A)^{-\frac{s_0}{2}}
=
\frac{1}{\Gamma(\frac{s_0}{2})} \int_{0}^\infty
t^{\frac{s_0}{2}-1} e^{-t} e^{-tA}\, dt
\quad \text{with $s_0>0$},
\end{equation}
\begin{equation}\label{appA:formula2}
A^{-\frac12} 
= \frac{1}{\Gamma\left(\frac12\right)} 
\int_0^\infty t^{-\frac12} e^{-tA} \, dt.
\end{equation}

Under Assumption \ref{assum:A}, we have $L^{q_1}$-$L^{q_2}$ estimates
\begin{equation}\label{A:LpLq_1}
\|A^{\frac{s}{2}}e^{-tA}\|_{L^{q_1}\to L^{q_2}} \lesssim t^{-2\alpha(\frac{1}{q_1}-\frac{1}{q_2}) -\frac{s}{2}}
\end{equation}
for any $t>0$ and for any $s\ge0$ and $1\le q_1 \le 2 \le q_2 \le \infty$. 
The proof of \eqref{A:LpLq_1} is based on the Riesz-Thorin interpolation theorem together with Assumption \ref{assum:A} and the duality argument
 (see the proof of \eqref{LpLq_1} in Lemma \ref{lem:LpLq}). 
Then we obtain the Sobolev inequality and the Gagliardo-Nirenberg inequality associated with $A$.

\begin{lem}[Sobolev inequality]\label{lem:Sobolev}
Suppose that Assumption \ref{assum:A} holds. 
Let $q \in [2,\infty]$ and $s>2\alpha(1-2/q)$. Then 
\[
\|f\|_{L^q} \lesssim  \|f\|_{H^s(A)}
\]
for any $f\in H^s(A)$. 
\end{lem}

\begin{lem}[Gagliardo-Nirenberg inequality]\label{lem:GN}
Suppose that Assumption \ref{assum:A} holds. 
Let $q \in (2,\infty]$ with $\alpha ( 1/2 - 1/q ) < 1/4$.
Then 
\begin{equation}\label{GN}
\|f\|_{L^q} \lesssim  \|f\|_{L^2}^{-4\alpha(\frac12 - \frac{1}{q}) +1} \|A^{\frac12}f\|_{L^2}^{4\alpha(\frac12 - \frac{1}{q})}
\end{equation}
for any $f \in H^1(A)$. 
\end{lem}

Imposing additionally Assumption \ref{assum:B}, 
we also obtain the Gagliardo-Nirenberg inequality \eqref{GN} with $\alpha ( 1/2 - 1/q ) = 1/4$, i.e., the critical Sobolev inequality.

\begin{lem}[Critical Sobolev inequality]\label{lem:cS}
Suppose that Assumptions \ref{assum:A} and \ref{assum:B} hold. 
Let $\alpha > 1/2$. Then 
\begin{equation}\label{c-Sobolev}
\|f\|_{L^\frac{4\alpha}{2\alpha-1}} \lesssim  \|A^{\frac12}f\|_{L^2}
\end{equation}
for any $f\in H^1(A)$.
\end{lem}

The proof of Lemma \ref{lem:Sobolev} is the same as the proof of \eqref{Sobolev-special} with $L^2$-$L^q$ estimates \eqref{A:LpLq_1} instead of $L^2$-$L^\infty$ estimates.
So, we may omit the proof. 

The proofs of Lemmas \ref{lem:GN} and \ref{lem:cS}
are the same as those of Theorem~6.2 in Ouhabaz \cite{Ouh_2005} and Theorem 2.4.2 in Davies \cite{D_1989}, respectively. 
Here, we give their proofs to make the paper self-contained. 

\begin{proof}[Proof of Lemma \ref{lem:GN}]
For $f \in H^1(A)$, we write 
\[
f = e^{-tA}f + \int_0^t Ae^{-sA}f\, ds.
\]
Then we see from \eqref{A:LpLq_1} that  
\[
\begin{split}
\|f\|_{L^q} 
& \le \|e^{-tA}f\|_{L^q} + \int_0^t \|A^{\frac12}e^{-sA}A^{\frac12}f\|_{L^q}\, ds\\
& \lesssim 
t^{-2\alpha(\frac12 - \frac{1}{q})}\|f\|_{L^2} + 
\int_0^t s^{-2\alpha(\frac12 - \frac{1}{q})-\frac12}\, ds \cdot \|A^{\frac12}f\|_{L^2}\\
& \lesssim 
t^{-2\alpha(\frac12 - \frac{1}{q})}\|f\|_{L^2} + 
t^{-2\alpha(\frac12 - \frac{1}{q})+\frac12} \|A^{\frac12}f\|_{L^2},
\end{split}
\]
since $\alpha ( 1/2 - 1/q ) < 1/4$. 
Therefore, by taking $t = \|f\|_{L^2}^2 \|A^{1/2}f\|_{L^2}^{-2}$, we obtain \eqref{GN}. 
The proof of Lemma \ref{lem:GN} is finished.
\end{proof}

\begin{proof}[Proof of Lemma \ref{lem:cS}]
Let $q_1 \in [1,2)$ and $q_2 \in (2,4\alpha)$ as in Assumption \ref{assum:B}. We use \eqref{appA:formula2} to write
\[
A^{-\frac12} f 
= \frac{1}{\Gamma\left(\frac12\right)} \left(
\int_0^T + \int_T^\infty
\right) t^{-\frac12} e^{-tA} f \, dt
= : f_1 + f_2
\]
for $T>0$. 
We see from the second estimate in Assumption \ref{assum:B} that, for $j=1,2$,
\[
\|f_2\|_{L^\infty}
\le C_0 T^{\frac12 -\frac{2\alpha}{q_j}} \|f\|_{L^{q_j}}
\]
with some $C_0>0$. 
Given $\eta>0$, we take $T$ to satisfy 
$
\eta/2 = C_0 T^{\frac12 -\frac{2\alpha}{q_j}} \|f\|_{L^{q_j}}
$.
We deduce from the first estimate in Assumption \ref{assum:B} that 
\[
\begin{split}
\big|\big\{ x : | A^{-\frac12}f(x)| \ge \eta\big\}\big|
& \le 
\bigg|\bigg\{ x : | f_1(x)| \ge \frac{\eta}{2}\bigg\}\bigg|
\le \bigg(\frac{\eta}{2}\bigg)^{-q_j}\|f_1\|_{L^{q_j}}^{q_j}\\
& \le 
C
\bigg(\frac{\eta}{2}\bigg)^{-{q_j}}
T^\frac{q_j}{2} \|f\|_{L^{q_j}}^{q_j}
\le 
C \eta^{-r_j}
 \|f\|_{L^{q_j}}^{r_j},
\end{split}
\]
where $r_j\in (1,\infty)$ satisfies $1/r_j = 1/q_j - 1/4\alpha$.
This implies that $A^{-\frac12}$ is of weak type $(q_j,r_j)$. 
By applying the Marcinkiewicz interpolation theorem, we deduce that $A^{-\frac12}$ is bounded from $L^2$ into $L^\frac{4\alpha}{2\alpha-1}$, which is equivalent to \eqref{c-Sobolev}.
The proof of Lemma \ref{lem:cS} is finished.
\end{proof}

\vspace{5mm}

\noindent{\bf Acknowledgment.} 
This work was supported by JSPS KAKENHI Grant Numbers 
JP16K17625,
JP18H01132,
JP19K14581,
JP19J00206,
JP20K14346,
and  JST CREST
Grant Number JPMJCR1913.
%%%%%%%%%%%%%%%%%%%%%%%%%%%%%%%%%%%%%%%%%%%%%%%%
%%%%%%%%%%% References %%%%%%%%%%%%%%%%%%%%%%%%%
%%%%%%%%%%%%%%%%%%%%%%%%%%%%%%%%%%%%%%%%%%%%%%%%

\begin{bibdiv}
\begin{biblist}

\bib{AGHH_1988}{book}{
   author={Albeverio, Sergio},
   author={Gesztesy, Friedrich},
   author={H\o egh-Krohn, Raphael},
   author={Holden, Helge},
   title={Solvable models in quantum mechanics},
   series={Texts and Monographs in Physics},
   publisher={Springer-Verlag, New York},
   date={1988},
%   pages={xiv+452},
%   isbn={0-387-17841-4},
%   review={\MR{926273}},
%   doi={10.1007/978-3-642-88201-2},
}

\bib{Bar_1995}{article}{
   author={Barlow, Martin T.},
   title={Diffusions on fractals},
   conference={
      title={Lectures on probability theory and statistics},
      address={Saint-Flour},
      date={1995},
   },
   book={
      series={Lecture Notes in Math.},
      volume={1690},
      publisher={Springer, Berlin},
   },
   date={1998},
   pages={1--121},
%   review={\MR{1668115}},
%   doi={10.1007/BFb0092537},
}

\bib{BBD-2020}{article}{
   author={Bui, Huy-Qui},
   author={Bui, The Anh},
   author={Duong, Xuan Thinh},
   title={Weighted Besov and Triebel-Lizorkin spaces associated with
   operators and applications},
   journal={Forum Math. Sigma},
   volume={8},
   date={2020},
   pages={Paper No. e11, 95},
%   review={\MR{4070730}},
%   doi={10.1017/fms.2020.6},
}

\bib{BDN-2020}{article}{
   author={Bui, The Anh},
   author={D'Ancona, Piero},
   author={Nicola, Fabio},
   title={Sharp $L^p$ estimates for Schr\"{o}dinger groups on spaces of
   homogeneous type},
   journal={Rev. Mat. Iberoam.},
   volume={36},
   date={2020},
   number={2},
   pages={455--484},
%   issn={0213-2230},
%   review={\MR{4082915}},
%   doi={10.4171/rmi/1136},
}

\bib{CWZ-1994}{article}{
   author={Chen, Z. Q.},
   author={Williams, R. J.},
   author={Zhao, Z.},
   title={A Sobolev inequality and Neumann heat kernel estimate for
   unbounded domains},
   journal={Math. Res. Lett.},
   volume={1},
   date={1994},
   number={2},
   pages={177--184},
%   issn={1073-2780},
%   review={\MR{1266756}},
%   doi={10.4310/MRL.1994.v1.n2.a5},
}

\bib{ChiHar-2003}{article}{
   author={Chill, Ralph},
   author={Haraux, Alain},
   title={An optimal estimate for the difference of solutions of two
   abstract evolution equations},
   journal={J. Differential Equations},
   volume={193},
   date={2003},
   number={2},
   pages={385--395},
%   issn={0022-0396},
%   review={\MR{1998640}},
%   doi={10.1016/S0022-0396(03)00057-3},
}

\bib{D_1989}{book}{
   author={Davies, E. B.},
   title={Heat kernels and spectral theory},
   series={Cambridge Tracts in Mathematics},
   volume={92},
   publisher={Cambridge University Press, Cambridge},
   date={1989},
%   pages={x+197},
%   isbn={0-521-36136-2},
%   review={\MR{990239}},
%   doi={10.1017/CBO9780511566158},
}

\bib{Fujita}{article}{
   author={Fujita, Hiroshi},
   title={On the blowing up of solutions of the Cauchy problem for
   $u_{t}=\Delta u+u^{1+\alpha }$},
   journal={J. Fac. Sci. Univ. Tokyo Sect. I},
   volume={13},
   date={1966},
   pages={109--124 (1966)},
%   issn={0368-2269},
%   review={\MR{214914}},
}

\bib{FIW-2020}{article}{
   author={Fujiwara, Kazumasa},
   author={Ikeda, Masahiro},
   author={Wakasugi, Yuta},
   title={The Cauchy problem of the semilinear second order evolution equation with fractional Laplacian and damping},
   journal={to appear in NoDEA, Nonlinear Differ. Equ. Appl., arXiv:2003.09239},
   date={2020},
}

\bib{GP-2020}{article}{
   author={Georgiev, Vladimir},
   author={Palmieri, Alessandro},
   title={Critical exponent of Fujita-type for the semilinear damped wave
   equation on the Heisenberg group with power nonlinearity},
   journal={J. Differential Equations},
   volume={269},
   date={2020},
   number={1},
   pages={420--448},
%   issn={0022-0396},
%   review={\MR{4081526}},
%   doi={10.1016/j.jde.2019.12.009},
}

\bib{HKN-2004}{article}{
   author={Hayashi, Nakao},
   author={Kaikina, Elena I.},
   author={Naumkin, Pavel I.},
   title={Damped wave equation with super critical semilinearities},
   journal={Differential Integral Equations},
   volume={17},
   date={2004},
   number={5-6},
   pages={637--652},
%   issn={0893-4983},
%   review={\MR{2054939}},
}

\bib{HMZ-2007}{article}{
   author={Holmer, Justin},
   author={Marzuola, Jeremy},
   author={Zworski, Maciej},
   title={Fast soliton scattering by delta impurities},
   journal={Comm. Math. Phys.},
   volume={274},
   date={2007},
   number={1},
   pages={187--216},
%   issn={0010-3616},
%   review={\MR{2318852}},
%   doi={10.1007/s00220-007-0261-z},
}

\bib{HO-2004}{article}{
   author={Hosono, Takafumi},
   author={Ogawa, Takayoshi},
   title={Large time behavior and $L^p$-$L^q$ estimate of solutions of
   2-dimensional semilinear damped wave equations},
   journal={J. Differential Equations},
   volume={203},
   date={2004},
   number={1},
   pages={82--118},
%   issn={0022-0396},
%   review={\MR{2070387}},
%   doi={10.1016/j.jde.2004.03.034},
}

\bib{IIOW-2019}{article}{
   author={Ikeda, Masahiro},
   author={Inui, Takahisa},
   author={Okamoto, Mamoru},
   author={Wakasugi, Yuta},
   title={$L^p$-$L^q$ estimates for the damped wave equatioan and the
   critical exponent for the semilinear problem with slowly decaying data},
   journal={Commun. Pure Appl. Anal.},
   volume={18},
   date={2019},
   number={4},
   pages={1967--2008},
%   issn={1534-0392},
%   review={\MR{3927427}},
%   doi={10.3934/cpaa.2019090},
}

\bib{IS-2019}{article}{
   author={Ikeda, Masahiro},
   author={Sobajima, Motohiro},
   title={Sharp upper bound for lifespan of solutions to some critical
   semilinear parabolic, dispersive and hyperbolic equations via a test
   function method},
   journal={Nonlinear Anal.},
   volume={182},
   date={2019},
   pages={57--74},
%   issn={0362-546X},
%   review={\MR{3894246}},
%   doi={10.1016/j.na.2018.12.009},
}

\bib{Ike-2002}{article}{
   author={Ikehata, Ryo},
   title={Diffusion phenomenon for linear dissipative wave equations in an
   exterior domain},
   journal={J. Differential Equations},
   volume={186},
   date={2002},
   number={2},
   pages={633--651},
%   issn={0022-0396},
%   review={\MR{1942225}},
%   doi={10.1016/S0022-0396(02)00008-6},
}

\bib{Ike-2003}{article}{
   author={Ikehata, Ryo},
   title={Fast decay of solutions for linear wave equations with dissipation
   localized near infinity in an exterior domain},
   journal={J. Differential Equations},
   volume={188},
   date={2003},
   number={2},
   pages={390--405},
%   issn={0022-0396},
%   review={\MR{1954287}},
%   doi={10.1016/S0022-0396(02)00101-8},
}

\bib{IkeNis-2003}{article}{
   author={Ikehata, Ryo},
   author={Nishihara, Kenji},
   title={Diffusion phenomenon for second order linear evolution equations},
   journal={Studia Math.},
   volume={158},
   date={2003},
   number={2},
   pages={153--161},
%   issn={0039-3223},
%   review={\MR{2013737}},
%   doi={10.4064/sm158-2-4},
}

\bib{IkeOht-2002}{article}{
   author={Ikehata, Ryo},
   author={Ohta, Masahito},
   title={Critical exponents for semilinear dissipative wave equations in
   $\bold R^N$},
   journal={J. Math. Anal. Appl.},
   volume={269},
   date={2002},
   number={1},
   pages={87--97},
%   issn={0022-247X},
%   review={\MR{1907875}},
%   doi={10.1016/S0022-247X(02)00021-5},
}

\bib{IkeTan-2005}{article}{
   author={Ikehata, Ryo},
   author={Tanizawa, Kensuke},
   title={Global existence of solutions for semilinear damped wave equations
   in $\bold R^N$ with noncompactly supported initial data},
   journal={Nonlinear Anal.},
   volume={61},
   date={2005},
   number={7},
   pages={1189--1208},
%   issn={0362-546X},
%   review={\MR{2131649}},
%   doi={10.1016/j.na.2005.01.097},
}

%\bib{IMSS-2016}{article}{
%   author={Ioku, Norisuke},
%   author={Metafune, Giorgio},
%   author={Sobajima, Motohiro},
%   author={Spina, Chiara},
%   title={$L^p-L^q$ estimates for homogeneous operators},
%   journal={Commun. Contemp. Math.},
%   volume={18},
%   date={2016},
%   number={3},
%   pages={1550037, 14},
%%   issn={0219-1997},
%%   review={\MR{3477399}},
%%   doi={10.1142/S0219199715500376},
%}
%
%\bib{IO-appear}{article}{
%   author={Ioku, Norisuke},
%   author={Ogawa, Takayoshi},
%   title={Critical dissipative estimate for a heat semigroup with a quadratic singular potential and critical exponent for semilinear heat equations},
%   journal={J. Differential Equations},
%   volume={266},
%   date={2019},
%   number={4},
%   pages={2274--2293},
%}

\bib{Iwa-2018}{article}{
   author={Iwabuchi, Tsukasa},
   title={The semigroup generated by the Dirichlet Laplacian of fractional
   order},
   journal={Anal. PDE},
   volume={11},
   date={2018},
   number={3},
   pages={683--703},
%   issn={2157-5045},
%   review={\MR{3738259}},
%   doi={10.2140/apde.2018.11.683},
}

\bib{IMT-RMI}{article}{
   author={Iwabuchi, Tsukasa},
   author={Matsuyama, Tokio},
   author={Taniguchi, Koichi},
   title={Boundedness of spectral multipliers for Schr\"odinger operators on open sets},
   journal={Rev. Mat. Iberoam.}
   volume={34},
   date={2018},
   number={3},
   pages={1277--1322},
}

\bib{IMT-Besov}{article}{
   author={Iwabuchi, Tsukasa},
   author={Matsuyama, Tokio},
   author={Taniguchi, Koichi},
   title={Besov spaces on open sets},
   journal={Bull. Sci. Math.},
   volume={152},
   date={2019},
   pages={93--149},
}

\bib{KM-2018}{article}{
   author={Kova\v{r}\'{i}k, Hynek},
   author={Mugnolo, Delio},
   title={Heat kernel estimates for Schr\"{o}dinger operators on exterior
   domains with Robin boundary conditions},
   journal={Potential Anal.},
   volume={48},
   date={2018},
   number={2},
   pages={159--180},
%   issn={0926-2601},
%   review={\MR{3748389}},
%   doi={10.1007/s11118-017-9629-7},
}

\bib{Mat-1976}{article}{
   author={Matsumura, Akitaka},
   title={On the asymptotic behavior of solutions of semi-linear wave
   equations},
   journal={Publ. Res. Inst. Math. Sci.},
   volume={12},
   date={1976/77},
   number={1},
   pages={169--189},
%   issn={0034-5318},
%   review={\MR{0420031}},
%   doi={10.2977/prims/1195190962},
}

\bib{Nar-2004}{article}{
   author={Narazaki, Takashi},
   title={$L^p$-$L^q$ estimates for damped wave equations and their
   applications to semi-linear problem},
   journal={J. Math. Soc. Japan},
   volume={56},
   date={2004},
   number={2},
   pages={585--626},
%   issn={0025-5645},
%   review={\MR{2048476}},
%   doi={10.2969/jmsj/1191418647},
}

\bib{Nis-2003}{article}{
   author={Nishihara, Kenji},
   title={$L^p$-$L^q$ estimates of solutions to the damped wave equation in
   3-dimensional space and their application},
   journal={Math. Z.},
   volume={244},
   date={2003},
   number={3},
   pages={631--649},
%   issn={0025-5874},
%   review={\MR{1992029}},
%   doi={10.1007/s00209-003-0516-0},
}

\bib{Nishiyama-2016}{article}{
   author={Nishiyama, Hisashi},
   title={Remarks on the asymptotic behavior of the solution to damped wave
   equations},
   journal={J. Differential Equations},
   volume={261},
   date={2016},
   number={7},
   pages={3893--3940},
%   issn={0022-0396},
%   review={\MR{3532059}},
%   doi={10.1016/j.jde.2016.06.014},
}

\bib{Ono-2003}{article}{
   author={Ono, Kosuke},
   title={Decay estimates for dissipative wave equations in exterior
   domains},
   journal={J. Math. Anal. Appl.},
   volume={286},
   date={2003},
   number={2},
   pages={540--562},
%   issn={0022-247X},
%   review={\MR{2008848}},
%   doi={10.1016/S0022-247X(03)00489-X},
}

\bib{Ouh_2005}{book}{
   author={Ouhabaz, El Maati},
   title={Analysis of heat equations on domains},
   series={London Mathematical Society Monographs Series},
   volume={31},
   publisher={Princeton University Press, Princeton, NJ},
   date={2005},
%   pages={xiv+284},
%   isbn={0-691-12016-1},
%   review={\MR{2124040}},
}

\bib{Pal-2020}{article}{
   author={Palmieri, Alessandro},
   title={Decay estimates for the linear damped wave equation on the
   Heisenberg group},
   journal={J. Funct. Anal.},
   volume={279},
   date={2020},
   number={9},
   pages={108721, 23},
%   issn={0022-1236},
%   review={\MR{4134446}},
%   doi={10.1016/j.jfa.2020.108721},
}

\bib{Pal-2021}{article}{
   author={Palmieri, Alessandro},
   title={On the blow-up of solutions to semilinear damped wave equations
   with power nonlinearity in compact Lie groups},
   journal={J. Differential Equations},
   volume={281},
   date={2021},
   pages={85--104},
%   issn={0022-0396},
%   review={\MR{4213668}},
%   doi={10.1016/j.jde.2021.02.002},
}

\bib{RTY-2011}{article}{
   author={Radu, Petronela},
   author={Todorova, Grozdena},
   author={Yordanov, Borislav},
   title={Diffusion phenomenon in Hilbert spaces and applications},
   journal={J. Differential Equations},
   volume={250},
   date={2011},
   number={11},
   pages={4200--4218},
%   issn={0022-0396},
%   review={\MR{2776911}},
%   doi={10.1016/j.jde.2011.01.024},
}

\bib{RTY-2016}{article}{
   author={Radu, Petronela},
   author={Todorova, Grozdena},
   author={Yordanov, Borislav},
   title={The generalized diffusion phenomenon and applications},
   journal={SIAM J. Math. Anal.},
   volume={48},
   date={2016},
   number={1},
   pages={174--203},
%   issn={0036-1410},
%   review={\MR{3447124}},
%   doi={10.1137/15M101525X},
}

\bib{RT-2019}{article}{
   author={Ruzhansky, Michael},
   author={Tokmagambetov, Niyaz},
   title={On semilinear damped wave equations for positive operators. I.
   Discrete spectrum},
   journal={Differential Integral Equations},
   volume={32},
   date={2019},
   number={7-8},
   pages={455--478},
%   issn={0893-4983},
%   review={\MR{3945764}},
}

\bib{Seg-2015}{article}{
   author={Segata, Jun-Ichi},
   title={Final state problem for the cubic semilinear Schr\"{o}dinger equation
   with repulsive delta potential},
   journal={Comm. Partial Differential Equations},
   volume={40},
   date={2015},
   number={2},
   pages={309--328},
%   issn={0360-5302},
%   review={\MR{3277928}},
%   doi={10.1080/03605302.2014.930753},
}

\bib{Simon-1982}{article}{
   author={Simon, Barry},
   title={Schr\"{o}dinger semigroups},
   journal={Bull. Amer. Math. Soc. (N.S.)},
   volume={7},
   date={1982},
   number={3},
   pages={447--526},
%   issn={0273-0979},
%   review={\MR{670130}},
%   doi={10.1090/S0273-0979-1982-15041-8},
}

\bib{Sob-2019}{article}{
 author={Sobajima, Motohiro},
   title={Global existence of solutions to semilinear damped wave equation
   with slowly decaying initial data in exterior domain},
   journal={Differential Integral Equations},
   volume={32},
   date={2019},
%   number={3},
   pages={615--638},
%   issn={0273-0979},
%   review={\MR{670130}},
%   doi={10.1007/s00208-020-01959-w},
}

\bib{Sob-2020}{article}{
   author={Sobajima, Motohiro},
   title={Higher order asymptotic expansion of solutions to abstract linear hyperbolic equations},
   journal={Math. Ann.},
   volume={380},
   date={2021},
%   number={3},
   pages={1--19},
%   issn={0273-0979},
%   review={\MR{670130}},
%   doi={10.1007/s00208-020-01959-w},
}

\bib{Tak-2011}{article}{
   author={Takeda, Hiroshi},
   title={Global existence of solutions for higher order semilinear damped
   wave equations},
   journal={Discrete Contin. Dyn. Syst., Dynamical systems, differential equations and applications. 8th
   AIMS Conference. Suppl. Vol. II},
   date={2011},
%   number={Dynamical systems, differential equations and applications. 8th
%   AIMS Conference. Suppl. Vol. II},
   pages={1358--1367},
%   issn={1078-0947},
%   isbn={978-1-60133-008-6},
%   isbn={1-60133-008-1},
%   review={\MR{3012939}},
}

\bib{TayYor-2020}{article}{
   author={Taylor, Montgomery},
   author={Todorova, Grozdena},
   title={The diffusion phenomenon for dissipative wave equations in metric
   measure spaces},
   journal={J. Differential Equations},
   volume={269},
   date={2020},
   number={12},
   pages={10792--10838},
%   issn={0022-0396},
%   review={\MR{4150360}},
%   doi={10.1016/j.jde.2020.07.018},
}

\bib{TodYor-2001}{article}{
   author={Todorova, Grozdena},
   author={Yordanov, Borislav},
   title={Critical exponent for a semilinear wave equation with damping},
   journal={J. Differential Equations},
   volume={174},
   date={2001},
   number={2},
   pages={464--489},
%   issn={0022-0396},
%   review={\MR{1846744}},
%   doi={10.1006/jdeq.2000.3933},
}

\bib{Zha-2001}{article}{
   author={Zhang, Qi S.},
   title={A blow-up result for a semilinear wave equation with damping: the
   critical case},
%   language={English, with English and French summaries},
   journal={C. R. Acad. Sci. Paris S\'{e}r. I Math.},
   volume={333},
   date={2001},
   number={2},
   pages={109--114},
%   issn={0764-4442},
%   review={\MR{1847355}},
%   doi={10.1016/S0764-4442(01)01999-1},
}

\end{biblist}
\end{bibdiv}

\end{document}